\renewcommand{\a }{\alpha }
\newcommand{\D }{\Delta }
\newcommand{\e }{\varepsilon }
\newcommand{\G }{\Gamma}
\newcommand{\n }{\nabla }
\newcommand{\vp }{\varphi }
\renewcommand{\phi}{\varphi}
\newcommand{\s }{\sigma }
\renewcommand{\t }{\tau }
\renewcommand{\O }{\Omega }
\newcommand{\ov}{\overline}
\newcommand{\be}{\begin{equation}}
\newcommand{\ee}{\end{equation}}
\newcommand{\R}{\mathbb{R}}
\newcommand{\N}{\mathbb{N}}
\newcommand{\C}{\mathbb{C}}
\newcommand{\de}{\partial}
\newcommand{\M}{\mathcal{M}}
\newcommand{\ra}{{\rangle}}
\newcommand{\la}{{\langle}}
\newcommand{\cN }{\mathcal{N}}
\newcommand{\cZ }{\mathcal{Z}}
\newcommand{\cV }{\mathcal{V}}
\newcommand{\cO }{\mathcal{O}}
\newcommand{\V}{\text{\tiny $V$}}
\newtheorem{Theorem}{Theorem}[section]
\newtheorem{Lemma}[Theorem]{Lemma}
\newtheorem{Proposition}[Theorem]{Proposition}
\newtheorem{Corollary}[Theorem]{Corollary}
\newtheorem{Remark}[Theorem]{Remark}
\newtheorem{Definition}[Theorem]{Definition}
\def\R{{\mathbb R}}
\def\C{{\mathcal C}}
\def\div{{\rm div}}
\def\eps{{\varepsilon}}
\font\sc=cmcsc9 \linespread{1.2} \textwidth=14truecm
\title{Critical domains for the first nonzero Neumann eigenvalue in Riemannian manifolds}
\author{Mouhamed Moustapha Fall   and   Tobias Weth }
\begin{document}
\date{}
\maketitle
\let\thefootnote\relax\footnotetext{\hspace{-0.6cm} M. M. Fall (mouhamed.m.fall@aims-senegal.org): African Institute for Mathematical Sciences in Senegal (AIMS Senegal), 
KM 2, Route de Joal, B.P. 14 18. Mbour, S\'en\'egal\\ \noindent
T. Weth (weth@math.uni-frankfurt): Goethe-Universit\"{a}t Frankfurt, Institut f\"{u}r Mathematik.
Robert-Mayer-Str. 10 D-60054 Frankfurt, Germany.}
 
 \begin{flushright}
   \small{\textit{to the memory of Ahmed El Soufi.}}
 \end{flushright}

\begin{abstract}
The present paper is devoted to geometric optimization problems related to the Neumann eigenvalue problem for the Laplace-Beltrami operator on bounded subdomains $\O$ of a Riemannian manifold $(\M,g)$. More precisely, we analyze locally extremal domains for the first nontrivial eigenvalue $\mu_2(\Omega)$ with respect to volume preserving domain perturbations, and we show that corresponding notions of criticality arise in the form of overdetermined boundary problems. Our results rely on an extension of Zanger's shape derivative formula which covers the case when $\mu_2(\Omega)$ is not a simple eigenvalue. In the second part of the paper, we focus on product manifolds of the form $\M = \R^k \times \cN$, and we classify the subdomains where an associated overdetermined boundary value problem has a solution. 
\end{abstract}

\section{Introduction}
\label{sec:introduction}
Let  $(\M,g)$ be a complete Riemannian manifold of dimension
$N$, $N\geq2$. For a bounded smooth domain $\O\subset \M$ with $C^2$-boundary, we consider the  Neumann eigenvalue problem
\begin{equation}
  \label{eq:16}
-\D_g u = \mu\,u \qquad \text{in $\O$},\qquad \de_\eta u =0\quad
\text{on $\de\O$},
\end{equation}
where $\D_g u=div_g(\n u)$ is the Laplace-Beltrami operator of $u$ on $\M$, $\eta$ is the outer unit normal to $\de\O$ and $ \de_\eta u:= \la \n u,\eta\ra_g$. The set of eigenvalues, counted with multiplicities, in the above
eigenvalue problem is given as an increasing sequence $0=\mu_1(\O)<\mu_2(\O)\leq\dots$. Of particular interest is the first nontrivial eigenvalue $\mu_2(\Omega)$, characterized variationally as 
\begin{equation}
  \label{eq:characterization-mu2}
\mu_2(\O)=\inf \Bigl\{\frac{\int_{\O}|\n u|^2\,dx}{\int_{\O} u^2\,dx} \::\: u \in  H^1(\O) \setminus \{0\},\: \int_{\O}u\,dx =0\Bigl\}. 
\end{equation}
Here $H^1(\O)$ is the usual first order Sobolev space. A natural question is to study extremal values of $\mu_2(\Omega)$ among domains $\Omega \subset \M$ satisfying a volume constraint. 
By classical results of Szeg\"{o} and Weinberger (see \cite{Sz,Wein}),
balls maximize $\mu_2$ among domains $\Omega$ having fixed volume $|\Omega|= v>0$ in
$\M=\R^{N}$. We note that, if $B^N \subset \R^N$ is the unit ball, the eigenvalue $\mu_2(B^N)$ has multiplicity $N$ with corresponding eigenfunctions of the form   
$x\mapsto \vp(|x|)\frac{x_i}{|x|}$, $i=1,\dots,N$, see Section~\ref{s:cylinder-case} below for details.
 As remarked in \cite{chavel} and \cite{Ashb-Bengu}, the maximization property of balls extends to the case of the $N$-dimensional hyperbolic space. Moreover, the
same property is valid in a hemisphere
\cite{Ashb-Bengu} and -- under further restrictions on the domain -- also 
in rank-1 symmetric spaces \cite{AS}. On the other hand, the problem of globally minimizing $\mu_2$ among domains $\Omega$ having fixed volume $|\Omega|=v<|\M|$ has no solution, since $\mu_2(\Omega)$ approaches zero 
within the class of domains $\Omega$ built by connecting two disjoint subdomains with a thin tube. 

The present paper consists of two parts. In the first part, we characterize -- by means of overdetermined boundary value problems -- subdomains of a general Riemannian manifold which are {\em locally} maximizing or minimizing $\mu_2$ with respect to volume preserving domain variations. In the second part we focus on the special case of cylindrical manifolds of the form 
$\R^k \times \cN$, where more information can be derived. Here $\cN$ is a given closed Riemannian manifold. In this case we wish to determine global constrained maximizers for $\mu_2$ and classify solutions of an associated overdetermined boundary value problem.

To state our main results, we need to introduce some notation. Since we assume that $\M$ is complete, we have a globally defined exponential map $\exp_x : T_x \M \to \M$ at every $x \in \M$, and every bounded subset of $\M$ is relatively compact. For a nonnegative integer $k$, we let $\cO^k(\M)$ denote the class of all bounded subdomains $\Omega \subset \M$ with $C^k$-boundary. Moreover, we let $\cV^k(\M)$ denote the space of all $C^k$-vector fields on $\M$ with bounded covariant derivatives of order $i \le k$, which is a Banach space with canonical norm $\|\cdot\|_{C^k}$, see e.g. \cite{Aubin}. For $V \in \cV^k(\M)$, 
we define the map 
$$
\tau_\V \in \C^k(\M,\M),\qquad \tau_\V(x)= \textrm{Exp}_{x}(V(x)),
$$
and we put $\Omega_{\V}:= \tau_{\V}(\Omega)$ for $\O  \subset \M$.

\begin{Definition}
\label{admissible-deformations}{\rm 
Let $\Omega \subset \M$ be a bounded domain. We say that $V \in \cV^1(\M)$ is an {\em admissible deformation field for $\Omega$} if $\tau_V$ maps a neighborhood of $\overline{\Omega}$ diffeomorphically onto a neighborhood of $\overline{\Omega_V}$ and $|\Omega_V|= |\Omega|$.} 
\end{Definition}

The requirement $V \in \cV^1(\M)$ in this definition guarantees  -- in particular -- that $\Omega_V$ has a $C^1$-boundary if this is true for $\Omega$. We can now define the notion of constrained local extrema for $\mu_2$. 

\begin{Definition}
\label{sec:definition-local-extrema}{\rm 
Let $\Omega  \in \cO^1(\M)$. We say that $\Omega$ is a {\em constrained local maximum for $\mu_2$} if there exists $\eps>0$ such that for every admissible deformation field $V \in \cV^1(\M)$ for $\Omega$ with $\|V\|_{C^1}<\eps$ we have $\mu_2(\Omega_\V)\le \mu_2(\Omega)$. 
If this inequality is strict in the case where $\Omega_\V \not = \Omega$, we call $\Omega$ a {\em strict constrained local maximum}. 
Constrained local minima are defined in an analogous way via the opposite inequalities.}
\end{Definition}

Finally, we define corresponding notions of criticality. The main difficulty here is the fact that $\mu_2(\Omega)$ may or may not be a simple eigenvalue. In the case where $\mu_2(\Omega)$ is simple, Zanger's formula \cite{Zanger} for the shape derivative of Neumann eigenvalues with respect to domain variations gives rise to a straightforward notion of criticality which we will refer to as {\em criticality in strong sense} in the following, see Definition~\ref{critical-points} below.
In the case where $\mu_2(\Omega)$ is degenerate, $\mu_2$ in general does not have shape derivatives at $\Omega$ and thus
Zanger's formula is not valid. In Proposition~\ref{zanger-formula} below we will derive a useful variant for one-sided shape derivatives. In contrast to the argument by Zanger in \cite{Zanger}, our derivation solely relies on the variational characterization of $\mu_2(\Omega)$, and we believe that the resulting formula does not extend to higher Neumann eigenvalues. On the other hand, the formula allows to conclude that constrained local minima for $\mu_2$ are critical in strong sense, whereas in the case of constrained local maxima it gives rise to a weaker notion of criticality. The precise notions of weak and strong criticality used in this paper are the following.  

\begin{Definition}
\label{critical-points}{\rm 
Let $\Omega \in \cO^1(\M)$.
\begin{enumerate}
\item[(i)] We say that $\Omega$ is a {\em constrained critical point for $\mu_2$ in strong sense} if, for some constant $\lambda \in \R$, there exists a solution $u \not = 0$ of the overdetermined problem 
\begin{equation}
\label{eq:overdetermined-0}
\left \{
\begin{aligned}
-\D_g u&=\mu_2(\Omega) u &&\qquad \text{in $\O$},\\
\partial_\eta u&=0,\quad |\nabla u|^2 -\mu_2(\Omega) u^2 = \lambda    
&&\qquad \text{on $\de \O$.}
  \end{aligned}
\right.
\end{equation}

\item[(ii)] We say that $\Omega$ is a {\em constrained critical point for $\mu_2$ in weak sense} if there exists finite many solutions $u_1,\dots,u_m \in C^2(\Omega)\setminus \{0\}$ of the Neumann eigenvalue problem 
\begin{equation*}
\left \{
\begin{aligned}
-\D_g u_i&=\mu_2(\Omega) u_i &&\qquad \text{in $\O$},\\
\partial_\eta u_i&=0&&\qquad
\text{on $\de \O$},\\
  \end{aligned}
\right.
\end{equation*}
with the property that $\sum \limits_{i=1}^m \Bigl(|\nabla u_i|^2 -\mu_2(\Omega) u_i^2\Bigr) = \lambda$ on $\de \O$ for some constant $\lambda \in \R$.
\end{enumerate}}
\end{Definition}

The weak notion of criticality defined here is inspired by \cite{Soufi-Ilias,Nadi}. The first main result of the present paper is the following. 

\begin{Theorem}
\label{sec:main-theorem-1}
Let $\Omega \in \cO^2(\M)$.\\
(i) If $\Omega$ is a constrained local maximum for $\mu_2$, then $\Omega$ is a constrained critical point for $\mu_2$ in weak sense.\\
(ii) If $\Omega$ is a constrained critical point for $\mu_2$ in strong sense and $\partial \Omega$ is connected, then it is a strict constrained local maximum for $\mu_2$.\\ 
(iii) If $\Omega$ is a constrained local minimum with respect to domain variations, then $\M$ is compact and  $\Omega = \M$.
\end{Theorem}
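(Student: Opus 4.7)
Our strategy throughout is to combine the one-sided shape-derivative formula of Proposition~\ref{zanger-formula} with a convex-duality argument on the finite-dimensional eigenspace $E := E(\mu_2(\Omega))$. The formula expresses $\left.\tfrac{d^+}{dt}\right|_{t=0}\mu_2(\Omega_{tV})$ as the infimum over $L^2$-normalized $u\in E$ of the boundary functional
\[
F_V(u):=\int_{\partial\Omega}\bigl(|\nabla u|^2-\mu_2(\Omega)\,u^2\bigr)\la V,\eta\ra_g\,d\sigma.
\]
Since every $\phi\in L^2(\partial\Omega)$ with $\int_{\partial\Omega}\phi\,d\sigma = 0$ arises as $\la V,\eta\ra_g$ for some admissible $V$, the local extremality of $\Omega$ translates into sign conditions on $F_V$ that restrict the structure of $E$.

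\emph{Part (i).} The local maximum property gives $\inf_u F_V(u)\le 0$ for every admissible $V$. Consider the convex cone
\[
C:=\Bigl\{\sum_{i=1}^{m}\bigl(|\nabla u_i|^2-\mu_2(\Omega)\,u_i^2\bigr):\,m\in\N,\ u_i\in E\setminus\{0\}\Bigr\}\subset L^2(\partial\Omega).
\]
Since $C$ lies in a finite-dimensional subspace of $L^2(\partial\Omega)$ and is closed there, the Hahn--Banach theorem reduces the problem to showing that no mean-zero linear functional strictly separates $C$ from the line of constants: for such a separating $\phi$ would produce an admissible $V$ with $\inf_u F_V(u)>0$, violating the local maximum inequality. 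Hence $C$ contains some constant $\lambda$, which is exactly the weak criticality assertion.

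\emph{Part (ii).} Given the overdetermined data $(u,\lambda)$ provided by strong criticality, the plan is to build a comparison function on $\Omega_V$ by setting $\widetilde u_V := u\circ\tau_V^{-1}-c_V$, with $c_V\in\R$ chosen so that $\int_{\Omega_V}\widetilde u_V\,dx = 0$. Substituting $\widetilde u_V$ into the variational characterization~\eqref{eq:characterization-mu2} and expanding in $V$ would then yield
\[
\mu_2(\Omega_V)\;\le\;\mu_2(\Omega)+\calL(V)+\calQ(V)+o(\|V\|_{C^1}^2).
\]
The linear term simplifies to $\int_{\partial\Omega}(|\nabla u|^2-\mu_2(\Omega)u^2)\la V,\eta\ra_g\,d\sigma = \lambda\int_{\partial\Omega}\la V,\eta\ra_g\,d\sigma = 0$ by the overdetermined condition together with volume preservation. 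The main obstacle is sharpening this to $\calQ(V)<0$ whenever $\Omega_V\ne \Omega$; the connectedness of $\partial\Omega$ enters here, both to guarantee that the single constant $\lambda$ governs the whole boundary and to rule out degenerate perturbations that alter $\Omega$ while leaving the quadratic correction indefinite. This strict inequality will deliver the strict local maximum.

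\emph{Part (iii).} A local minimum forces $\inf_u F_V(u)\ge 0$ for every admissible $V$, so $F_V(u)\ge 0$ for \emph{every} $u\in E$ and every admissible $V$. Replacing $V$ by $-V$ yields $F_V(u)=0$, i.e.\ $\int_{\partial\Omega}(|\nabla u|^2-\mu_2(\Omega)u^2)\,\phi\,d\sigma = 0$ for all mean-zero $\phi$. Consequently $|\nabla u|^2-\mu_2(\Omega)u^2$ is constant on all of $\partial\Omega$ for every eigenfunction $u$, so $\Omega$ is automatically a constrained critical point in strong sense, with a single overdetermined constant $\lambda$ on the entire boundary regardless of connectedness. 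If $\Omega\subsetneq\M$, an admissible $V$ with $\Omega_V\ne\Omega$ exists, and the quadratic expansion of part~(ii) (whose connectedness hypothesis is no longer needed here, because the single-$\lambda$ condition has just been established a fortiori) then produces $\mu_2(\Omega_V)<\mu_2(\Omega)$ for small nontrivial $V$, contradicting the local minimum property. Therefore $\Omega = \M$, and $|\Omega|<\infty$ compels $\M$ to be compact.
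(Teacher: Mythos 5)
Your Part (i) follows essentially the same convex-separation route as the paper (the paper works with the compact convex hull of $K_0 = \{(|\nabla u|^2-\mu_2(\Omega)u^2)|_{\partial\Omega}: u\in L\}$ and invokes Carath\'eodory plus Mazur; you work with the conic hull, which amounts to the same thing after rescaling the $u_i$). The initial reduction in (iii) — that local minimality forces $|\nabla u|^2-\mu_2(\Omega)u^2$ to be constant on $\partial\Omega$ for \emph{every} eigenfunction $u$, hence strong criticality with no connectedness hypothesis — also matches the paper's Corollary~\ref{overdetermined-corollary}(i).

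The genuine gap is in Part (ii), and it propagates into (iii). You propose to test the Rayleigh quotient on $\Omega_V$ with the pullback $\widetilde u_V := u\circ\tau_V^{-1}-c_V$ and Taylor-expand, then argue $\calQ(V)<0$; you explicitly flag the quadratic term as ``the main obstacle.'' This cannot be repaired as stated: the Rayleigh quotient of a \emph{fixed-in-disguise} test function is an upper bound for $\mu_2(\Omega_V)$ that coincides with $\mu_2(\Omega)$ at $V=0$, so its second variation dominates the (nonsmooth, one-sided) second variation of $\mu_2$ only from above, and since translations in the cylinder setting leave $\mu_2$ invariant the quadratic form is at best semi-definite — you cannot extract strictness this way. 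The paper's Proposition~\ref{proposition-implication-strong-sense} does something qualitatively different and fully global: from the overdetermined conditions it first derives $u^2\equiv\lambda_0:=-\lambda/\mu_2(\Omega)$ on $\partial\Omega$, then computes $\partial_{\eta\eta}[u^2]=2\lambda<0$ to get $u^2<\lambda_0$ in a collar $U_\eps(\partial\Omega)\cap\Omega$, and then \emph{extends $u$ by the constants $\pm\sqrt{\lambda_0}$} outside $\Omega$ near $\Gamma_\pm$ to obtain a $C^1$ competitor $w$ on $\Omega_V$. For $w$, the Dirichlet energy over $\Omega_V$ can only drop (the extension is locally constant) while the $L^2$ denominator, after subtracting the mean and a Cauchy--Schwarz estimate that uses the locally volume-preserving hypothesis, strictly increases by $2\int_{\Omega\setminus\Omega_V}|u|(\sqrt{\lambda_0}-|u|)\,dx>0$. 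This gives $\mu_2(\Omega_V)<\mu_2(\Omega)$ for every such $V$ with $\Omega_V\ne\Omega$, not merely for small $V$, and it is here — not in the existence of a single boundary constant — that the connectedness of $\partial\Omega$ enters: it guarantees that \emph{every} small admissible $V$ is locally volume preserving. Your (iii) then calls the unproved quadratic step, so it too is incomplete until the extension-by-constants mechanism is supplied.
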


Some remarks are in order. It is already evident from the euclidean case $\M = \R^N$ that, in general, criticality in weak sense cannot be improved to criticality in strong sense for constrained local maxima $\Omega \in \cO^2(\M)$ for $\mu_2$. Indeed, by Weinberger's result discussed above, the unit ball $\Omega = B^N \subset \R^N$ is a constrained global (and thus local) maximizer, and it does not admit a solution of the overdetermined problem (\ref{eq:overdetermined-0}) unless $N=1$. On the other hand, we shall see in Corollary~\ref{overdetermined-corollary} below that constrained local minima $\Omega \in \cO^2(\M)$ for $\mu_2$ are critical in strong sense, and from this we will deduce Theorem~\ref{sec:main-theorem-1}(iii). 

As indicated already, the proof of Theorem~\ref{sec:main-theorem-1} relies on the calculation of one-sided shape derivatives along curves of admissible deformation fields for $\Omega$. In the case where $\Omega \in \cO^2(\M)$, these curves are closely related to $C^1$-functions $h: \partial \Omega \to \R$ with $\int_{\partial \Omega} h\,d\sigma= 0$. More precisely, for any such function $h$, there exists $\eps_0>0$ and a $C^1$-curve $(-\eps_0,\eps_0) \to \cV^1(\M)$, $t \mapsto V_t$ of admissible deformation fields for $\Omega$
with $V_0 = 0$ and $\partial_t \big|_{t=0} V_t \equiv h \eta$ on $\partial \Omega$, where $\eta$ is the outer unit normal on $\partial \Omega$ as before. This fact is rather well known (at least in the euclidean case, see e.g. \cite{henrot-book-1,henrot-book-2}), and we give a short proof for the convenience of the reader in Lemma~\ref{sec:technical-lemma-1}(i) below. Note that we require $\Omega \in \cO^2$ to guarantee that $\eta$ and therefore $V$ are of class $C^1$.

In \cite{Soufi-Ilias}, the authors derive a similar notion of criticality in weak sense for locally extremals of higher Dirichlet eigenvalues on $-\Delta_g$ on $\M$ with respect to variations of the metric $g$. With regard to the underlying methods, the present paper differs from \cite{Soufi-Ilias}
as we use the variational characterization of $\mu_2(\Omega)$ instead of Kato's analytic perturbation theory used in \cite{Soufi-Ilias}.

We shall see in Remark~\ref{on-the-connectedness} that the connectedness assumption on $\partial \Omega$ in Theorem~\ref{sec:main-theorem-1}(ii) cannot be removed. On the other hand, a more general version of Theorem~\ref{sec:main-theorem-1}(ii) -- not requiring the connectedness of $\partial \Omega$ -- is available when the class of admissible deformation fields is reduced. For $\eps>0$ and a compact  subset $K$ of $\M$, we denote by $U_\eps(K)$ the $\e$-tubular neighborhood of $K$ in $\M$. 

\begin{Definition}
\label{def-locally-volume-preserving}
Let $\eps>0$. We say that an admissible deformation field $V \in \cV^1(\M)$ for $\Omega \in \cO^1(\M)$ is {\em locally volume preserving in $U_\eps(\partial \Omega)$ } if $(\Omega_\V \setminus \Omega) \cup (\Omega \setminus \Omega_\V) \subset U_\eps(\partial \Omega)$ and 
$$
|(\Omega_\V \setminus \Omega) \cap A| = |(\Omega \setminus \Omega_\V) \cap A|\qquad \text{for every connected component $A$ of $U_\eps(\partial \Omega)$.}
$$
\end{Definition}

In the case where $\Omega \in \cO^2(\M)$, locally volume preserving admissible deformation fields for $\Omega$ can, similarly as remarked above, be constructed starting from $C^1$-functions $h: \partial \Omega \to \R$ with the property that
$\int_{\Gamma} h\,d\sigma = 0$ for every connected component  $\G \subset \partial \Omega$, see Lemma \ref{sec:technical-lemma-1}(ii) below.
We then have the following generalization of Theorem~\ref{sec:main-theorem-1}(ii).

\begin{Theorem}
\label{sec:special-theorem-1}
If $\Omega \in \cO^2(\M)$ is a constrained critical point for $\mu_2$ in strong sense, then 
there exists $\eps>0$ such that $\mu_2(\Omega_\V) < \mu_2(\Omega)$ for every 
admissible deformation field $V$ for $\Omega$ which is locally volume preserving in $U_\eps(\partial \Omega)$ and such that $\Omega_\V \not = \Omega$.
\end{Theorem}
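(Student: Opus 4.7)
My plan is to adapt the proof of Theorem~\ref{sec:main-theorem-1}(ii) to the present situation, where the connectedness hypothesis on $\partial \Omega$ is dropped at the cost of restricting to locally volume preserving admissible deformation fields. Let $u \in C^2(\overline \Omega)$ denote a solution of \eqref{eq:overdetermined-0} provided by the strong criticality of $\Omega$, extended to a $C^2$ function on a neighborhood of $\overline \Omega$ in $\M$, and choose $\eps > 0$ small enough that the components $A_1,\dots,A_m$ of $U_\eps(\partial \Omega)$ are in bijection with the components $\Gamma_1,\dots,\Gamma_m$ of $\partial \Omega$ via $\Gamma_i \subset A_i$, and that $u$ is defined throughout $U_\eps(\partial\Omega)$. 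For $V$ as in the statement, I take $w_V := u - \overline{u}_V$ with $\overline{u}_V := |\Omega_V|^{-1}\int_{\Omega_V} u\, dx$ as a trial function in \eqref{eq:characterization-mu2}; since $\int_{\Omega_V} w_V\, dx = 0$,
\begin{equation*}
\mu_2(\Omega_V) \;\le\; \mathcal{R}(V) \;:=\; \frac{\int_{\Omega_V} |\nabla u|^2\,dx}{\int_{\Omega_V} (u - \overline{u}_V)^2\,dx},\qquad \mathcal{R}(0) = \mu_2(\Omega),
\end{equation*}
and the proof reduces to showing $\mathcal{R}(V) < \mu_2(\Omega)$.

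For any $C^1$ curve $t \mapsto V_t \in \cV^1(\M)$ of admissible deformations with $V_0 = 0$ and $\partial_t V_t|_{t=0} = W$, the Reynolds transport formula (using that $\overline{u}_0 = 0$ since $u$ is orthogonal to constants) gives
\begin{equation*}
\frac{d}{dt}\Big|_{t=0} \mathcal{R}(V_t) \;=\; \frac{1}{\int_\Omega u^2\,dx}\int_{\partial \Omega}\!\bigl(|\nabla u|^2 - \mu_2(\Omega) u^2\bigr) \langle W,\eta\rangle\,d\sigma \;=\; \frac{\lambda}{\int_\Omega u^2\,dx}\int_{\partial \Omega} \langle W,\eta\rangle\,d\sigma
\end{equation*}
after invoking the overdetermined identity $|\nabla u|^2 - \mu_2(\Omega) u^2 \equiv \lambda$ on $\partial \Omega$. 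When $V_t$ is locally volume preserving in $U_\eps(\partial\Omega)$ for small $t$, differentiating the componentwise identity $|\Omega_{V_t} \cap A_i| = |\Omega \cap A_i|$ at $t = 0$ yields $\int_{\Gamma_i} \langle W, \eta\rangle\,d\sigma = 0$ for each $i$, so the first variation of $\mathcal{R}$ vanishes along every such curve.

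It remains to show that the second variation of $\mathcal{R}$ at $V = 0$ is strictly negative in every direction $W$ whose normal trace $h := \langle W, \eta\rangle$ is nontrivial and satisfies the linearized constraint $\int_{\Gamma_i} h\,d\sigma = 0$ for all $i$. Following the same shape calculus as in the proof of Theorem~\ref{sec:main-theorem-1}(ii)---expanding numerator and denominator of $\mathcal{R}(V_t)$ to second order, integrating by parts, and using the overdetermined identity together with the eigenvalue equation---one obtains an explicit quadratic form $Q(h)$ which is non-positive on $\{h \in C^1(\partial \Omega) : \int_{\partial \Omega} h\,d\sigma = 0\}$ with kernel equal to the subspace of functions that are constant on each component $\Gamma_i$. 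The refined constraint $\int_{\Gamma_i} h\,d\sigma = 0$ for all $i$ eliminates this residual kernel (a constant $c_i$ on $\Gamma_i$ with zero integral there must vanish), giving $Q(h) < 0$ for every nontrivial admissible $h$. A compactness argument then upgrades this to a coercive bound $Q(h) \le -c\|h\|_{L^2(\partial \Omega)}^2$ on the constrained subspace, and combined with standard shape-calculus remainder estimates this yields $\mathcal{R}(V) < \mu_2(\Omega)$ for every admissible locally volume preserving $V$ with $\Omega_V \ne \Omega$ of sufficiently small $C^1$ norm.

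The main obstacle is this final analysis: identifying the kernel of $Q$ and verifying that the local volume constraint precisely removes it. In the connected case of Theorem~\ref{sec:main-theorem-1}(ii), the single condition $\int_{\partial \Omega} h\,d\sigma = 0$ already suffices because the kernel reduces to the one-dimensional space of constants, which is automatically excluded by that constraint. For disconnected $\partial \Omega$, the kernel is strictly larger---corresponding to infinitesimal mass exchange between components of $\Omega$ lying near different components of the boundary---and precisely this residual obstruction is removed by the local volume preservation hypothesis.
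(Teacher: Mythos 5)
Your proposal takes a genuinely different route from the paper, and unfortunately it contains a gap at the decisive step.

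The paper does \emph{not} prove Theorem~\ref{sec:special-theorem-1} (or Theorem~\ref{sec:main-theorem-1}(ii)) by a first/second shape-variation expansion. Instead, Proposition~\ref{proposition-implication-strong-sense} gives a direct, global argument in the spirit of Weinberger: one first shows $u^2 \equiv \lambda_0 := -\lambda/\mu_2(\Omega)$ on $\partial\Omega$ (hence $\nabla u = 0$ there), then computes $\partial_{\eta\eta}[u^2] = 2\lambda < 0$ to obtain $u^2 < \lambda_0$ in a collar $U_\eps(\partial\Omega)\cap\Omega$, and then tests $\mu_2(\Omega_V)$ with the very specific $C^1$ extension $w$ equal to $u$ inside $\Omega$ and to the \emph{constants} $\pm\sqrt{\lambda_0}$ on $U_\eps(\Gamma_\pm)\setminus\Omega$. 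With this extension the numerator $\int_{\Omega_V}|\nabla w|^2$ can only decrease (since $\nabla w = 0$ outside $\Omega$), while a Cauchy--Schwarz estimate on $\int_{\Omega_V} w$ shows the denominator strictly increases whenever $|\Omega\setminus\Omega_V| > 0$. The locally volume preserving hypothesis enters precisely in bounding $\left(\int_{\Omega_V} w\right)^2$: one needs $|(\Omega_V\setminus\Omega)\cap U_\eps(\Gamma_\pm)| = |(\Omega\setminus\Omega_V)\cap U_\eps(\Gamma_\pm)|$ componentwise, not merely globally. This argument is exact, not perturbative, and yields the inequality for \emph{all} admissible $V$ whose symmetric difference lies in $U_\eps(\partial\Omega)$, regardless of $\|V\|_{C^1}$.

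Your proposal has two concrete problems. First, and most seriously, the second-variation step is not carried out: you assert that an explicit quadratic form $Q(h)$ exists, is nonpositive, and has kernel exactly the componentwise-constant traces, but none of this is established. It is in fact delicate: since $\nabla u = 0$ and $u^2 = \lambda_0$ on $\partial\Omega$, both $|\nabla u|^2$ and $u^2$ have vanishing normal derivatives there, and several boundary terms in the standard second-order Hadamard formula degenerate; moreover the sign of the second variation depends on the choice of $C^2$ extension of $u$ past $\partial\Omega$, which you leave unspecified. A generic $C^2$ extension has $|\nabla u|^2 > 0$ just outside $\Omega$ and the numerator of your Rayleigh quotient could \emph{increase} under outward perturbation, so the conclusion simply does not follow for an arbitrary extension; the constant extension used in the paper is essential, and it is only $C^1$. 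Second, even if the second-variation analysis were completed correctly, it would give the strict inequality only for $V$ with small $C^1$-norm, which is weaker than the theorem: the statement fixes $\eps$ and then allows any $V$ whose symmetric difference is contained in $U_\eps(\partial\Omega)$ and preserves volume locally, with no bound on $\|V\|_{C^1}$. You would need an additional global argument to pass from the infinitesimal estimate to the full statement, and at that point you would effectively be forced to redo the paper's direct test-function computation anyway.
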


Next we restrict our attention to cylindrical manifolds of the type $\M:=\R^k \times \cN$, where $(\cN,g_\cN)$ is a closed connected manifold and the product metric $g= g_{eucl} \otimes g_{\cN}$ is considered on $\M$. For the problem of maximizing $\mu_2(\Omega)$ among domains of fixed volume $v$, one may expect a different shape of maximizers depending on the size of $v$. If $v>0$ is small, the results in \cite{fall-weth-sw-profile} on the corresponding asymptotic profile expansion suggest that maximizing domains are perturbations of small geodesic ellipsoids in $\M$, whereas for large $v$ the domains  
$$
\Omega_r:= \{(t,x) \in \M \::\; t \in \R^k,\,|t| \le r,\, x \in \cN\} \:\subset \:\M, \qquad r>0
$$
are natural candidates for maximizers in view of Weinberger's result \cite{Wein} for the euclidean case. The following result partially supports this intuition. For this we consider the (critical) volume parameter 
$$
v_c:= \Bigl(\frac{\mu_2(B^k)}{\mu_2(\cN)}\Bigr)^{\frac{k}{2}} \omega_k |\cN|. 
$$
Here $B^k \subset \R^k$ denotes the unit ball with volume $\omega_k$, $|\cN|$ denotes the volume of $\cN$, and $\mu_2(\cN)$ resp. $\mu_2(B^k)$ denote the first nontrivial Neumann eigenvalues of 
$-\Delta_{g_{\text{\tiny $\cN$}}}$, $-\Delta_{g_{\textrm{\tiny eucl}}}$ on $\cN$, $B^k$, respectively.

\begin{Theorem}
\label{sec:weinberger-modified}
Let $v  \ge v_c$ and $r= \Bigl(\frac{v}{\omega_k|\cN|}\Bigr)^{\frac{1}{k}}$, so that $|\Omega_r| = v$. Then we have 
$$
\mu_2(\Omega) \le \mu_2(\Omega_r) \qquad \text{for every domain $\Omega \in \cO^1(\M)$ with $|\Omega|=v$.}
$$
Moreover, equality holds if and only if $\Omega$ coincides with $\Omega_r$ up to translation in the $\R^k$-variable.
\end{Theorem}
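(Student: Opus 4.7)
The plan is to carry out a Weinberger-type test function construction adapted to the cylindrical geometry $\M = \R^k \times \cN$. First, by separation of variables on the product $\Omega_r = B_r^k \times \cN$ (whose boundary is $\partial B_r^k \times \cN$, the factor $\cN$ being closed), the Neumann eigenvalues of $\Omega_r$ are precisely the pairwise sums $\lambda_i^B + \lambda_j^\cN$ of Neumann eigenvalues on $B_r^k$ and Laplace eigenvalues on $\cN$. Hence
$$
\mu_2(\Omega_r) \;=\; \min\bigl(\mu_2(B_r^k),\,\mu_2(\cN)\bigr) \;=\; \min\bigl(\mu_2(B^k)/r^2,\,\mu_2(\cN)\bigr),
$$
and the hypothesis $v \ge v_c$ is exactly the condition $r^2 \ge \mu_2(B^k)/\mu_2(\cN)$, yielding $\mu_2(\Omega_r) = \mu_2(B^k)/r^2 = \mu_2(B_r^k)$.

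For the upper bound on $\mu_2(\Omega)$, I let $\varphi : [0,r] \to \R$ denote the radial profile of a first nontrivial Neumann eigenfunction of $B_r^k$, so that $t \mapsto \varphi(|t|)\,t_i/|t|$ spans the corresponding eigenspace; this profile satisfies $\varphi(0)=0$, $\varphi'(r)=0$, and is strictly increasing on $(0,r)$ (by classical Bessel-function analysis). I extend it to a $C^1$-function $G : [0,\infty) \to \R$ by setting $G \equiv \varphi(r)$ on $[r,\infty)$, and for each $t_0 \in \R^k$ I consider
$$
u_i^{t_0}(t,x) \;:=\; G(|t-t_0|)\,\frac{(t-t_0)_i}{|t-t_0|} \,\in\, H^1(\Omega), \qquad i = 1,\dots,k.
$$
A direct computation yields $\sum_{i=1}^k (u_i^{t_0})^2 = A(|t-t_0|)$ and $\sum_{i=1}^k |\nabla u_i^{t_0}|^2 = B(|t-t_0|)$, where $A(s) = G(s)^2$ is nondecreasing and $B(s) = G'(s)^2 + (k-1)G(s)^2/s^2$ is nonincreasing on $[0,\infty)$. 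A standard Brouwer degree argument --- the continuous vector field $t_0 \mapsto \bigl(\int_\Omega u_i^{t_0}\,dx\bigr)_{i=1}^k$ behaves like $-\varphi(r)|\Omega|\,t_0/|t_0|$ as $|t_0|\to\infty$ and hence points inward on large spheres --- produces some $t_0 \in \R^k$ at which all integrals $\int_\Omega u_i^{t_0}\,dx$ vanish.

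The decisive step is a level-set comparison. Writing $d(t,x) := |t-t_0|$ and setting $f(s) = |\Omega \cap \{d \le s\}|$, $F(s) = |(B_r(t_0)\times \cN) \cap \{d \le s\}| = \min(\omega_k s^k |\cN|,\,v)$, the trivial bounds $f(s) \le \omega_k s^k |\cN|$ and $f(s) \le v$ give $f \le F$ everywhere on $[0,\infty)$. Integration by parts (layer-cake) combined with the monotonicities of $A$ and $B$ then yields
$$
\int_\Omega B(d)\,dx \;\le\; \int_{B_r(t_0)\times\cN} \!B(d)\,dx, \qquad \int_\Omega A(d)\,dx \;\ge\; \int_{B_r(t_0)\times\cN} \!A(d)\,dx.
$$
Inserting each $u_i^{t_0}$ into the variational characterization \eqref{eq:characterization-mu2} and summing gives $\mu_2(\Omega)\int_\Omega A(d)\,dx \le \int_\Omega B(d)\,dx$; dividing, applying the two displayed rearrangement inequalities, and using that on $B_r(t_0)$ the ratio $\int B / \int A$ is exactly the Rayleigh quotient of the genuine eigenfunctions $\varphi(|t|)t_i/|t|$ of $B_r^k$ and so equals $\mu_2(B_r^k) = \mu_2(\Omega_r)$, produces the desired inequality $\mu_2(\Omega) \le \mu_2(\Omega_r)$.

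For the equality case, tracing back through the chain forces $\int_\Omega A(d) = \int_{B_r(t_0)\times\cN} A(d)$. Since $A'(s) = 2\varphi(s)\varphi'(s) > 0$ on $(0,r)$, the layer-cake identity then forces $f(s) = F(s) = \omega_k s^k |\cN|$ for a.e.\ $s\in(0,r)$; thus $B_s(t_0)\times\cN \subset \Omega$ up to null sets for a.e.\ $s<r$, and the volume constraint $|\Omega| = \omega_k r^k|\cN|$ upgrades this to $\Omega = B_r(t_0)\times\cN$, i.e.\ $\Omega_r$ translated by $(t_0,0)$ in the $\R^k$-factor. The main obstacle I expect is a clean implementation of the rearrangement step in this mixed Euclidean/Riemannian setting: the fact that the test functions depend only on the $\R^k$-coordinate is what makes the whole argument go through, reducing the rearrangement to a one-dimensional comparison of distribution functions of $|t-t_0|$; the secondary delicacy is the strict monotonicity of $\varphi$ on $(0,r)$, indispensable for the sharp equality characterization.
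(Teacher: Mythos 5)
Your proposal is correct and follows essentially the same route as the paper: reduce via separation of variables to identifying $\mu_2(\Omega_r)$ with $\mu_2(B^k)/r^2$ when $v\ge v_c$, then run the Weinberger construction (Brouwer centering, radial test functions $G(|t-t_0|)(t-t_0)_i/|t-t_0|$, monotonicity of $G^2$ and of $[G']^2+(k-1)G^2/s^2$, comparison of $\Omega$ with $\Omega_r$ using $|\Omega|=|\Omega_r|$). The only cosmetic difference is that you package the rearrangement step via distribution functions and a layer-cake identity, whereas the paper compares $\int_\Omega$ and $\int_{\Omega_r}$ directly by decomposing into $\Omega\cap\Omega_r$, $\Omega\setminus\Omega_r$, $\Omega_r\setminus\Omega$; these are equivalent. (Incidentally, your expression for $\sum_i|\nabla v_i|^2$ correctly carries the factor $k-1$, which the paper's displayed formula for $H$ omits as an evident typo.)
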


It remains open whether the $v_c$ is optimal in Theorem~\ref{sec:weinberger-modified}. The value $v_c$ is critical in the sense there exists eigenfunctions for $\mu_2(\Omega_r)$ which do not depend on the $\cN$-variable if and only if $v \ge v_c$, i.e., if and only if $r \ge \Bigl(\frac{\mu_2(B^k)}{\mu_2(\cN)}\Bigr)^{\frac12}$. This property is essential for the proof of Theorem~\ref{sec:weinberger-modified}, which is modeled on Weinberger's argument in \cite{Wein}. 

In the case where $k=1$, the domains $\Omega_r \subset \M$, $r \ge \Bigl(\frac{\mu_2(B^k)}{\mu_2(\cN)}\Bigr)^{\frac12}$ also have the special property of being constrained critical points for $\mu_2$ in strong sense. The following results shows that, up to translation, these are the only examples arising in this setting.

\begin{Theorem}
\label{sec:introduction-3-intro}
Let $\Omega \in \cO^2(\M)$ be a domain such that the overdetermined problem 
\begin{equation*}
\left \{
\begin{aligned}
-\D_g u&=\mu_2(\Omega) u &&\qquad \text{in $\O$},\\
\partial_\eta u&=0,\quad
|\nabla u|^2 -\mu_2(\Omega) u^2 = \lambda    
&&\qquad \text{on $\de \O$}.
  \end{aligned}
\right.
\end{equation*}
admits a solution for some constant $\lambda \in \R$. Then $k=1$ and $\Omega=\Omega_r$ for some $r \ge \Bigl(\frac{\mu_2(B^k)}{\mu_2(\cN)}\Bigr)^{\frac{1}{2}}$ up to translation in the $t$-variable.
\end{Theorem}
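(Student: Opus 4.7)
The plan is to combine a Pohozaev-type identity adapted to the $\R^k$-factor with a rigidity argument forcing $u$ and $\Omega$ to be $\cN$-invariant, and then reduce to the Euclidean overdetermined Neumann eigenvalue problem. Let $u\not\equiv 0$ be a solution of the overdetermined problem and set $\mu:=\mu_2(\Omega)$.

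The first step is a Pohozaev-type identity obtained by applying the divergence theorem to $Z := |\nabla u|^2 X - 2(Xu)\nabla u$, where $X=\sum_{i=1}^k t_i\,\partial_{t_i}$ is the radial vector field in the $\R^k$-factor of $\M$. Since $\mathrm{div}\,X=k$ and the symmetric part of $\nabla X$ equals the orthogonal projection onto the $\R^k$-directions, one computes $\mathrm{div}\,Z = k\,|\nabla u|^2 - 2|\nabla_t u|^2 + 2\mu\,u(Xu)$. Integrating over $\Omega$ and using $\partial_\eta u=0$ together with $|\nabla u|^2-\mu u^2=\lambda$ on $\partial\Omega$, the auxiliary identity $\mathrm{div}(u^2 X) = 2u(Xu)+k u^2$, and the energy identity $\int_\Omega|\nabla u|^2 = \mu\int_\Omega u^2$, one arrives at
\begin{equation*}
k\,\lambda\,|\Omega| \;=\; -\,2\int_\Omega|\nabla_t u|^2\,dV,
\end{equation*}
so in particular $\lambda\le 0$, with equality iff $\nabla_t u\equiv 0$.

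The crux of the argument is then to show that $u$ depends only on the $t$-variable, so that $\Omega = U\times\cN$ for some bounded connected $U\subset\R^k$. The strategy I envisage is to spectrally decompose $u$ along an $L^2(\cN)$-orthonormal eigenbasis $\{\psi_j\}_{j\ge 0}$ of $-\Delta_\cN$ with eigenvalues $0=\beta_0<\beta_1\le\dots$, and to argue—via the variational characterization \eqref{eq:characterization-mu2} applied to suitable test functions built from $u$ and the $\psi_j$, together with the Pohozaev identity above—that all modes with $j\ge 1$ vanish. Once $u=u(t)$, the Neumann condition $\langle\nabla_t u,\eta_t\rangle=0$ on $\partial\Omega$ (with $\eta_t$ the $\R^k$-component of the outer unit normal), combined with unique continuation for $u$, forces $\partial\Omega$ to be $\cN$-fibered; hence $\Omega=U\times\cN$.

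With the product structure in hand, the problem reduces to the Euclidean overdetermined Neumann eigenvalue problem on $U\subset\R^k$. For $k\ge 2$ a direct inspection of the $\mu_2(B^k)$-eigenfunctions $x\mapsto\phi(|x|)\,x_i/|x|$ recalled in the Introduction shows that $|\nabla u|^2-\mu u^2$ on $\partial B^k_r$ depends nontrivially on $x_i^2$; ruling out all $U\subset\R^k$ for $k\ge 2$ can be accomplished by combining the Pohozaev identity, the Szeg\"o–Weinberger inequality, and the strict-local-maximum property established in Theorem~\ref{sec:special-theorem-1}. In dimension $k=1$ the only possibility up to translation is $U=(-r,r)$ with $u(t)=c\sin(\pi t/(2r))$, giving $\mu=\pi^2/(4r^2)$; the fact that $\mu$ must be the \emph{first} nontrivial eigenvalue of $\Omega_r=(-r,r)\times\cN$ rather than a higher one coming from the $\cN$-factor then forces $\mu\le\mu_2(\cN)$, i.e.\ $r\ge\bigl(\mu_2(B^1)/\mu_2(\cN)\bigr)^{1/2}$, as claimed. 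The main obstacle is the rigidity step: the Pohozaev identity controls $\int_\Omega|\nabla_t u|^2$ only, and since $\cN$ is closed and carries no natural analogue of the radial position field $X$, an identity controlling $\int_\Omega|\nabla_\cN u|^2$ is not readily available; making the spectral decomposition strategy rigorous on a non-product domain $\Omega$—ensuring in particular that the candidate test functions lie in $H^1(\Omega)$ and respect the zero-mean constraint intrinsic to \eqref{eq:characterization-mu2}—is the delicate point where most of the effort lies.
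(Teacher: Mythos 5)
Your Pohozaev identity is correct and is a nice observation (with $\nabla u\equiv 0$ on $\partial\Omega$ from the overdetermined condition, the boundary term in $\int_\Omega\div\,Z$ vanishes and one indeed gets $k\,\lambda\,|\Omega|=-2\int_\Omega|\nabla_t u|^2$). But it only yields $\lambda<0$, which follows more directly from $\nabla u\equiv 0$ and $u^2\equiv -\lambda/\mu_2(\Omega)>0$ on $\partial\Omega$. The heart of your plan — showing $u$ depends only on $t$ via a spectral decomposition along $\cN$, and then ruling out $k\ge 2$ by combining Pohozaev with Szeg\H{o}--Weinberger and Theorem~\ref{sec:special-theorem-1} — is not carried out, and you explicitly acknowledge that the rigidity step is "the delicate point where most of the effort lies." That step is precisely where the proof has to happen, and no workable mechanism is supplied: the mode-by-mode variational argument requires admissible $H^1(\Omega)$ test functions with zero mean on the non-product domain $\Omega$, and it is unclear how the Pohozaev identity (which controls $\int_\Omega|\nabla_t u|^2$, not $\int_\Omega|\nabla_\cN u|^2$) would kill the $j\ge 1$ modes. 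Likewise, the proposed route to excluding $k\ge 2$ ("direct inspection of the $\mu_2(B^k)$-eigenfunctions... ruling out all $U\subset\R^k$...") is a sketch without an argument.

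The paper's proof takes a genuinely different and more elementary route that avoids both of these obstacles. Rather than trying to prove $u=u(t)$ directly, it studies the directional derivatives $u_\sigma=\partial_{(\sigma,0)}u$ for unit vectors $\sigma\in\R^k$. Since $\nabla u\equiv 0$ on $\partial\Omega$, each $u_\sigma$ solves the \emph{Dirichlet} problem $-\Delta_g u_\sigma=\mu_2(\Omega)u_\sigma$ in $\Omega$, $u_\sigma=0$ on $\partial\Omega$. Comparing $\mu_2(\Omega)$ with the second Dirichlet eigenvalue via \eqref{eq:characterization-mu2} and using unique continuation, one concludes $u_\sigma$ has a fixed sign in $\Omega$ and is nonzero. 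For $k\ge 2$ this is immediately contradictory: $\sigma\mapsto u_\sigma$ is continuous, $\sigma$ and $-\sigma$ are connected through unit vectors, yet $u_{-\sigma}=-u_\sigma$. This kills $k\ge 2$ in two lines, with no need for the Euclidean classification you invoke. For $k=1$, monotonicity of $t\mapsto u(t,x)$ plus Courant's nodal domain theorem shows each $x$-slice of $\Omega$ is a single interval on which $u$ runs increasingly from $-\sqrt{\lambda_0}$ to $\sqrt{\lambda_0}$; a sliding comparison with the explicit translates $v_s(t,x)=\sqrt{\lambda_0}\sin\bigl(\tfrac{\pi}{2r}(t+s)\bigr)$ (capped at $\pm\sqrt{\lambda_0}$ outside the strip) and the Hopf boundary lemma then forces $\Omega=\Omega_r$ up to translation, and $\mu^r\le\mu_2(\cN)$ gives the lower bound on $r$. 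None of the sliding/nodal-domain machinery appears in your proposal; without it, the argument does not close.
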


The proof of Theorem~\ref{sec:introduction-3-intro} is not straightforward, as it combines the analysis of partial derivatives of eigenfunctions (with respect to the $t$-variable) with estimates on the number of nodal domains and a sliding argument using the cylindrical structure of the problem. We recall that, in the euclidean setting, the sliding method has been developed in \cite{berestycki-nirenberg:91}.

\begin{Remark}
\label{on-the-connectedness}{\rm 
In the case $k=1$, $r \ge \Bigl(\frac{\mu_2(B^k)}{\mu_2(\cN)}\Bigr)^{\frac{1}{2}}$ the domain $\Omega=\Omega_r \subset \M$ is a constrained global maximizer for $\mu_2$ and a constrained critical points for $\mu_2$ in strong sense, but it is not a {\em strict} constrained local maximizer. Indeed, for given $\eps>0$, one may consider an admissible deformation field $V \in \cV^1(M)$ for $\Omega$ with $\|v\|_{C^1}< \eps$ and such that $\Omega_V$ is a mere translation of $\Omega$ in the $t$-variable, which implies that $\mu_2(\Omega_V)= \mu_2(\Omega)$.
This shows that we cannot remove the additional assumptions on $\partial \Omega$ or $V$ in Theorems \ref{sec:main-theorem-1}(ii) and \ref{sec:special-theorem-1}.}
\end{Remark}

The paper is organized as follows. Section~\ref{sec:preliminaries} contains two preliminary lemmas. The first provides an expansion of metrics associated with  domain deformations, and the second ensures the existence of suitable curves of  admissible deformation fields. In Section~\ref{zanger-formula}, we prove a one-sided variant of Zanger's shape derivative formula which holds without requiring simplicity of $\mu_2$. From this formula, we then derive the solvability of associated overdetermined boundary value problems, and by this we complete the proof of Theorem~\ref{sec:main-theorem-1}. In Section \ref{s:cylinder-case} we restrict our attention to the case of cylindrical manifolds, and we prove Theorems
\ref{sec:weinberger-modified} and \ref{sec:introduction-3-intro}.\\

\textbf{Acknowledgement:}
M.M.F. is supported by the Alexander von Humboldt Foundation. Moreover, M.M.F. and T.W. wish to thank the German Academic Exchange Service (DAAD) for funding of this work within the program 57385104 ``Local and nonlocal effects in geometric variational problems''.

\section{Preliminaries}
\label{sec:preliminaries}
In this section we state and prove two preliminary lemmas. We start with a lemma on the expansion of a pullback metric under a curve of diffeomorphisms generated by a corresponding curve of vector fields. 

\begin{Lemma}
\label{lemma-appendix}
Consider a $C^1$-curve $(-\eps_0,\eps_0) \to \cV^1(\M)$, $\eps \mapsto V_\eps$  of vector fields $V_\eps \in \cV^1(\M)$ with $V_0 = 0$ 
and 
the maps 
$$
\tau_\eps \in \C^1(\M,\M), \qquad \tau_\eps(x)= \textrm{Exp}_{x}(V_\eps(x)), \qquad \eps \in (-\eps_0,\eps_0).
$$
Moreover, let $g_\eps$ denote the pull back of the metric $g$ under the map $\tau_\eps$ for $\eps \in (-\eps_0,\eps_0).$
In local coordinates $x_1,\dots,x_N$, setting $\de_i= \frac{\de}{\de x_i}$ and $g_{ij}=\la\de_i, \de_j \ra_g$, $g_{\e\!,\,ij}=\la\de_i, \de_j \ra_{g_\e}$, we then have the locally uniform expansions 
\begin{align}
g_{\e\!,\,ij}&= g_{ij} + \eps \langle \nabla_{\partial_j} V, \partial_i \rangle_g + \eps \langle \nabla_{\partial_i} V, \partial_j \rangle_g +o(\eps), \label{expansion-1-app}\\
g_\eps^{\;\; ij}&= g^{ij} - \eps \langle \nabla_{\partial_j} V, \partial_i \rangle_g - \eps \langle \nabla_{\partial_i} V, \partial_j \rangle_g +o(\eps),\label{expansion-2-app}
\end{align}
as $\eps \to 0$, where $V:= \partial_\eps \big|_{\eps=0} V_\eps$ and $(g_\eps^{\;\; ij})_{ij}$ denotes the inverse of $(g_{\e\!,\,ij})_{ij}$. Moreover, for the volume form of $g_\eps$ we have the expansion 
\begin{equation}
dv_{g_\eps}(x)= \Bigl(1+ \eps \, \div_g V + o(\eps)\Bigr)dv_g(x) \quad \text{as $\eps \to 0$.}
  \label{eq:expansion-3-app}
\end{equation}
\end{Lemma}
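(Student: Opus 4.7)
The plan is to work in local coordinates $x_1,\dots,x_N$, expand $\tau_\eps(x)=\textrm{Exp}_x(V_\eps(x))$ and its Jacobian to first order in $\eps$, substitute into the pullback formula
\[
g_{\eps,ij}(x) \;=\; g_{kl}(\tau_\eps(x)) \, \pa_i \tau_\eps^k(x) \, \pa_j \tau_\eps^l(x),
\]
and recognise the first-order coefficient as a component of the Lie derivative $\mathcal{L}_V g$. The inverse-metric and volume-form expansions then follow by routine linear algebra.

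First I would use the coordinate expansion $\textrm{Exp}_x(W)^k = x^k + W^k - \tfrac12 \Gamma^k_{lm}(x) W^l W^m + O(|W|^3)$ together with the fact that, since $\eps \mapsto V_\eps$ is $C^1$ into $\cV^1(\M)$ with $V_0 = 0$, one has $V_\eps = \eps V + r_\eps$ with $\|r_\eps\|_{C^1} = o(\eps)$ locally uniformly in $x$. Differentiating $x \mapsto \textrm{Exp}_x(V_\eps(x))$ componentwise in $x^i$, and using that $\pa_{W^j}\textrm{Exp}_x(W)|_{W=0} = \delta_j^k$ is independent of $x$ (so that the basepoint-velocity mixed term only contributes at order $o(\eps)$), this yields the locally uniform expansions
\[
\tau_\eps^k(x) = x^k + \eps V^k(x) + o(\eps), \qquad \pa_i \tau_\eps^k(x) = \delta_i^k + \eps\, \pa_i V^k(x) + o(\eps).
\]

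Next I would substitute these expansions, together with $g_{kl}(\tau_\eps(x)) = g_{kl}(x) + \eps V^m \pa_m g_{kl}(x) + o(\eps)$, into the pullback formula and multiply out. The coefficient of $\eps$ becomes
\[
V^m \pa_m g_{ij} + g_{kj} \pa_i V^k + g_{il} \pa_j V^l,
\]
which is the standard coordinate expression of $(\mathcal{L}_V g)_{ij}$. Using the compatibility identity $\pa_k g_{ij} = g_{lj}\Gamma^l_{ki} + g_{il}\Gamma^l_{kj}$ of the Levi-Civita connection together with $\n_{\pa_i} V = (\pa_i V^k + \Gamma^k_{il} V^l)\pa_k$, a short calculation rewrites this coefficient as $\la \n_{\pa_i} V, \pa_j\ra_g + \la \n_{\pa_j} V, \pa_i\ra_g$, giving (\ref{expansion-1-app}). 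For (\ref{expansion-2-app}) I would apply the matrix expansion $(A + \eps B)^{-1} = A^{-1} - \eps A^{-1} B A^{-1} + o(\eps)$ to $(g_{\eps,ij})$. For (\ref{eq:expansion-3-app}) I would combine $\det(g_\eps) = \det(g)\bigl(1 + \eps\,\tr(g^{-1}\mathcal{L}_V g) + o(\eps)\bigr)$ with the identity $\tr(g^{-1}\mathcal{L}_V g) = 2 g^{ij}\la \n_{\pa_i} V, \pa_j\ra_g = 2\,\div_g V$, which is the standard coordinate formula for the divergence, and then take a square root.

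The main obstacle is the first step: differentiating the composition $x \mapsto \textrm{Exp}_x(V_\eps(x))$ simultaneously in $x$ and $\eps$, with locally uniform control of the $o(\eps)$ error. Here the $C^1$-dependence of $\eps \mapsto V_\eps$ in the $\cV^1(\M)$-topology is crucial, as it ensures that $V_\eps$ and its first covariant derivatives differ from $\eps V$ and $\eps \n V$ respectively by $o(\eps)$ in the supremum norm on compact sets. Everything after that step is routine coordinate bookkeeping and standard Riemannian identities.
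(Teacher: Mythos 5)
Your proposal is correct, but it takes a genuinely different route from the paper. The paper's computation is intrinsic: it fixes $w \in T_x\M$, interprets $D\tau_\eps(x)[w]$ as a derivative along a curve, and then swaps the order of covariant differentiation (the symmetry lemma for covariant derivatives along the two-parameter surface $(\eps,t) \mapsto \tau_\eps(\alpha(t))$) to obtain $\frac{D}{d\eps}\big|_{\eps=0}D\tau_\eps(x)[w] = [\nabla_w V](x)$ directly. From there, $\partial_\eps\big|_{\eps=0} g_{\eps,ij} = \langle \nabla_{\partial_i}V,\partial_j\rangle_g + \langle \nabla_{\partial_j}V,\partial_i\rangle_g$ falls out by the product rule for the metric, with no explicit coordinate expansion of $\textrm{Exp}$ and no appearance of Christoffel symbols until the volume-form step. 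Your argument instead Taylor-expands $\textrm{Exp}_x(W)$ in coordinates, plugs into the coordinate pullback formula, recognises the $\eps$-coefficient as $(\mathcal{L}_V g)_{ij} = V^m\partial_m g_{ij} + g_{kj}\partial_i V^k + g_{il}\partial_j V^l$, and then uses metric compatibility and $\Gamma^k_{il}=\Gamma^k_{li}$ to rewrite it covariantly. The key subtle point in your route — that $\partial_{W^j}\textrm{Exp}_x(W)^k\big|_{W=0}=\delta_j^k$ is $x$-independent, so the mixed $\partial_x\partial_W$ term vanishes at $W=0$ and the basepoint dependence only enters at order $O(\eps^2)$ — is exactly what the paper's covariant formulation handles automatically. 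You have this right, and your treatment of the inverse metric and determinant expansions matches the paper's. Both approaches are valid; the paper's is cleaner because it never needs the explicit coordinate expansion of the exponential map, while yours is more elementary and self-contained in a coordinate chart.
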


\begin{proof}
We first prove (\ref{expansion-1-app}). Fix $x\in \M$ and $w\in T_x\M$. Then we have 
$$
D\t_\e(x)[w]=\frac{d}{dt}\Big |_{t=0}\t_\e( \a(t)), 
$$
where $\a:(-\e,\e)\to \M$ is a smooth curve with $\a(0)=x$ and $\frac{d \a}{dt}(0)=w$.  
Note that the curve $\e\mapsto \t_\e(x)$ satisfies $\t_0(x)=x$ and 
$$
\frac{d}{d\e}\Big|_{\e=0} \t_\e(x)=  \frac{d}{d\e} \Big|_{\e=0} \textrm{Exp}_{x}(V_\eps(x)) =
\frac{d}{d\e} \Big|_{\e=0} V_\eps(x)= D \textrm{Exp}_{x}(0)V(x) = V(x).
$$
Therefore, denoting by $\frac{D}{d}$ covariant  derivatives along curves, we get 
\begin{align}
\frac{D}{d \e}\Big|_{\e=0}D\t_\e(x)[w]&= \frac{D}{d \e}\Big|_{\e=0} \frac{d }{dt}\Big|_{t=0}  \t_\e( \a(t))= \frac{D}{d t}\Big|_{t=0}\frac{d}{d\e}\Big|_{\e=0} \t_\e( \a(t))  \nonumber\\
&= \frac{D}{d t}\Big|_{t=0} V(\a(t))=\n_{ \frac{d \a}{dt}}V(\a(t)) \Big|_{t=0} =[\n_w V](x). \label{eq:mix-deriv}
\end{align}
Next, we consider local coordinates $(y_1,\dots,y_N)$ in a neighborhood of $x$. Moreover, we write $\de_i= \frac{\de}{\de y_i}$ for the corresponding coordinate vector fields and 
$$
g_{ij}=\la\de_i, \de_j \ra_g, \qquad \text{as well as}\qquad  g_{\e\!,\,ij}=\la\de_i, \de_j \ra_{g_\e} \quad \text{for $\eps \in (-\eps_0,\eps_0)$.}
$$
For fixed $i,j$, the function  
\begin{equation}
\label{x-eps-c-1}
(x,\eps) \mapsto g_{\e\!,\,ij}(x)=\la D\t_\e(x)[ \de_i],  D\t_\e(x)[ \de_j] \ra_g\Big|_{\t_\e(x)}
\end{equation}
then satisfies $g_{0\!,\,ij}(x)=g_{ij}(x)$ and, by \eqref{eq:mix-deriv}, 
\begin{align*}
\partial_{\eps} \Big|_{\eps=0} g_{\e\!,\,ij}(x)&=\Bigl \langle \frac{D}{d \e} \Big|_{\e=0}  D\t_\e(x)[ \de_i],  D\t_0(x)[ \de_j] \Bigr \rangle_g
 + \Bigl\langle D\t_0(x)[ \de_i],  \frac{D}{d \e}\Big |_{\e=0}  D\t_\eps(x)[ \de_j] \Bigr \rangle_g\\
&= \Bigl( \langle  \n_{\de_i} V,\de_j  \rangle_g+ \langle  \n_{\de_j} V,\de_i  \rangle_g \Bigr) \Big|_{x}.
\end{align*}
Consequently, we have that 
$$
g_{\e\!,\,ij}(x)= g_{ij}(x) + \eps \Bigl( \langle  \n_{\de_i} V,\de_j  \rangle_g+ \langle  \n_{\de_j} V,\de_i  \rangle_g \Bigr) \Big|_{x}
 +o(\eps)
$$
as $\eps \to 0$. Moreover, this expansion is locally uniform in $x$, since it follows from the assumption that 
the functions $(x,\eps) \mapsto \partial_\eps \, g_{\e\!,\,ij}(x)$, $i,j,=1,\dots,N$ are continuous in $x$ and $\eps$. Hence (\ref{expansion-1-app}) holds, and (\ref{expansion-2-app}) is a direct consequence of (\ref{expansion-1-app}). It thus remains to derive (\ref{eq:expansion-3-app}) from (\ref{expansion-1-app}). For this we note that 
\begin{equation}
  \label{eq:rel-representation}
dv_{g_\eps}(x) = \sqrt{\frac{|g_\eps|}{|g|}}dv_{g}(x) \quad \text{with $|g|:= \det (g_{ij})$, $|g_\eps|:= \det (g_{\e\!,\,ij})$.}
\end{equation}
Moreover, writing $V= V^k \de_k$ in local coordinates, we see that   
\begin{align*}
|g_\eps| &= |g|\Bigl(1 + \eps g^{ij} \partial_{\eps}\Big|_{\eps=0} g_{\e\!,\,ij} + o(\eps)\Bigr)= |g|\Bigl(1 + \eps g^{ij} \Bigl( \la  \n_{\de_i} V,\de_j  \ra_g+ \la  \n_{\de_j} V,\de_i  \ra_g\Bigr) + o(\eps)\Bigr)\\
&= |g|\Bigl(1 + \eps  g^{ij} \Bigl[\Bigl(\de_i V^k + V^\ell \Gamma^{k}_{\ell i}\Bigr) \la \de_k,\de_j  \ra_g
+ \Bigl(\de_j V^k + V^\ell \Gamma^{k}_{\ell j}\Bigr) \la \de_k,\de_i  \ra_g \Bigr)\Bigr] + o(\eps) \Bigr)\\
&= |g|\Bigl(1 + \eps  g^{ij} \Bigl[\Bigl(\de_i V^k + V^\ell \Gamma^{k}_{\ell i}\Bigr) g_{k j}
+ \Bigl(\de_j V^k + V^\ell \Gamma^{k}_{\ell j}\Bigr) g_{k i} \Bigr)\Bigr]+ o(\eps) \Bigr)\\
&= |g|\Bigl(1 + 2 \eps   \Bigl(\de_k V^k + V^\ell \Gamma^{k}_{\ell k} \Bigr)+o(\eps) \Bigr)= |g|\Bigl(1 + 2 \eps \, \div_g V+o(\eps) \Bigr)
\end{align*}
and consequently $\sqrt{\frac{|g_\eps|}{|g|}} = 1 + 2 \eps \, \div_g V+o(\eps)$ as $\eps \to 0$.
Combining this expansion with (\ref{eq:rel-representation}), we obtain (\ref{eq:expansion-3-app}). Moreover, the expansion is locally uniform since this is the case for (\ref{expansion-1-app}). This ends the proof. 
\end{proof}

The next lemma ensures the existence of curves of admissible deformation fields for a given domain $\Omega \in \cO^2(\M)$. It follows in a straightforward way 
from a well known rate of change formula for the volume functional (see e.g. \cite{henrot-book-1,henrot-book-2}), but we prefer to give a proof for the convenience of the reader. 

\begin{Lemma}
\label{sec:technical-lemma-1}
Let $\Omega \in \cO^2(\M)$, let $h: \partial \Omega \to \R$ be a $C^1$-function and $\eps>0$. Then there exists $\eps_0>0$ and a $C^1$-curve $(-\eps_0,\eps_0) \to \cV^1(\M)$, $t \mapsto V_t$ 
with 
$$
V_0 = 0,\qquad \partial_t \Bigl|_{t=0} V_t \equiv h \eta  \quad \text{on $\partial \Omega$} 
$$
and the following properties:
\begin{enumerate}
\item[(i)] If $\int_{\partial \Omega} h\,d\sigma = 0$, then $V_t$ is an admissible deformation field for $\Omega$ for $t \in (-\eps_0,\eps_0)$.
\item[(ii)] If $\int_{\Gamma} h\,d\sigma = 0$ for every connected component  $\G \subset \partial \Omega$, then, for $t \in (-\eps_0,\eps_0)$, $V_t$ is an admissible deformation field for $\Omega$ which is locally volume preserving in $U_\eps(\partial \Omega)$.
\end{enumerate}
Moreover, if $h \not \equiv 0$, then $\Omega_{\V_t}  \not = \Omega$ for every $t \in (-\eps_0,\eps_0)$.
\end{Lemma}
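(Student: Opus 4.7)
My plan is to construct $V_t$ as a small perturbation of a naive field $tW$, where $W \in \cV^1(\M)$ extends $h\eta$, with a finite-dimensional higher-order correction that restores exact (local) volume preservation via the implicit function theorem.

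First, using the $C^2$-regularity of $\partial \Omega$, I extend $\eta$ as a $C^1$ field on a tubular neighborhood of $\partial \Omega$ via the normal exponential map, and extend $h$ by composition with the nearest-point projection to $\partial \Omega$ times a cutoff. This produces $W \in \cV^1(\M)$ with $W|_{\partial \Omega} = h\eta$ and support in a prescribed tubular neighborhood of $\partial \Omega$. Since $\partial \Omega$ is compact, it has finitely many connected components $\Gamma_1,\dots,\Gamma_m$, and for $\eps$ small enough the $\eps$-tubular neighborhoods $A_i := U_\eps(\Gamma_i)$ are pairwise disjoint and exhaust $U_\eps(\partial \Omega)$. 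Using a partition of unity I may arrange $W = \sum_i W_i$ with $\mathrm{supp}(W_i) \subset A_i$. I also pick auxiliary fields $U_1,\dots,U_m \in \cV^1(\M)$, each supported in $A_i$ and satisfying $\int_{\Gamma_i} \langle U_i,\eta\rangle_g\, d\sigma \neq 0$ (for instance, a cutoff multiple of $\eta$ supported in $A_i$).

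Next, I consider $V_{t,s} := tW + \sum_{j=1}^m s_j U_j$ for $(t,s) \in \R \times \R^m$, with associated map $\tau_{t,s} = \mathrm{Exp}_{\cdot}(V_{t,s})$. For $(t,s)$ small enough, $\tau_{t,s}$ is a $C^1$-small diffeomorphism of $\M$, equal to the identity outside $\bigcup_i A_i$ and mapping each $A_i$ into itself. I define $F: \R \times \R^m \to \R^m$ by $F_i(t,s) = |\tau_{t,s}(\Omega) \cap A_i| - |\Omega \cap A_i|$. The volume-form expansion \eqref{eq:expansion-3-app} of Lemma~\ref{lemma-appendix}, combined with the divergence theorem, yields
$$
\partial_{s_j} F_i(0,0) = \int_{\Omega \cap A_i} \div_g U_j\, dv_g = \delta_{ij} \int_{\Gamma_i} \langle U_i,\eta\rangle_g\, d\sigma,
$$
since $U_j$ is supported inside $A_j$ and vanishes on $\partial A_j$. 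This Jacobian is diagonal and invertible, so the implicit function theorem produces $C^1$ functions $s_j(t)$ with $s_j(0)=0$ and $F_i(t,s(t)) = 0$ for all $i$ and $t$ small; set $V_t := V_{t,s(t)}$. By construction $V_0 = 0$, $V_t$ is admissible, and it is locally volume preserving in $U_\eps(\partial \Omega)$.

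Finally, differentiating $F_i(t,s(t)) = 0$ at $t=0$ and using $\partial_t F_i(0,0) = \int_{\Gamma_i} h\, d\sigma = 0$ (from the hypothesis in case (ii)) forces $s'_j(0) = 0$, whence $\partial_t V_t|_{t=0} = W$, which restricts to $h\eta$ on $\partial \Omega$. Part (i) is identical but uses a single auxiliary field $U$ with $\int_{\partial\Omega}\langle U,\eta\rangle_g\,d\sigma \neq 0$ and the single constraint $|\Omega_{V_t}| = |\Omega|$; here $\partial_t F(0,0) = \int_{\partial \Omega} h\, d\sigma = 0$ by assumption. For the last assertion, if $h \not\equiv 0$, pick $x_0 \in \partial \Omega$ with $h(x_0) \neq 0$; then for $t \neq 0$ small, $V_t(x_0) = t\,h(x_0)\eta(x_0) + o(t)$ is a nontrivial normal displacement, so $\tau_{V_t}(x_0) \notin \partial \Omega$ and $\Omega_{V_t} \neq \Omega$. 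The main technical obstacle is the careful justification of the multi-parameter expansion of $F_i$ and the verification that $\tau_{t,s}$ preserves each connected component $A_i$, both of which follow from the compact-support structure of $V_{t,s}$ and $C^1$-proximity to the identity.
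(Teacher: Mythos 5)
Your proof is correct and follows essentially the same strategy as the paper's: a naive deformation field extending $h\eta$, a finite-dimensional correction supported near the relevant boundary components, the implicit function theorem with nondegeneracy supplied by the volume-form expansion of Lemma~\ref{lemma-appendix}, and differentiation of the constraint at $t=0$ to show the correction vanishes to first order. The only cosmetic difference is that the paper builds the correction as $\delta_i(t)W_i$ (scalar multiples of extensions of $\eta$), whereas you introduce separate auxiliary fields $U_j$; your pointwise normal-displacement argument for the final assertion $\Omega_{V_t}\neq\Omega$ is a reasonable explication of what the paper leaves to a brief remark.
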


\begin{proof}
Let $W \in \cV^1(\M)$ be an arbitrary extension of the outer normal $\eta$ on $\partial \Omega$, and let $\tilde h \in C^1(\M)$ be an extension of $h$ to $\M$.  We first consider the case where $\int_{\partial \Omega} h\,d\sigma = 0$, as assumed in (i).  We then define the $C^1$-function 
$$
\R \times \R \to \R, \qquad (t,\delta) \mapsto  |\Omega_{\text{\tiny $V_{t,\delta}$}}| \quad \text{with $V_{t,\delta}= (t \tilde h + \delta)W \in \cV^1(\M)$.}  
$$
By the volume element expansion given in the appendix, Lemma \ref{lemma-appendix}, we then have that 
$$
\frac{\partial}{\partial \delta}\Bigl|_{(t,\delta)=(0,0)} |\Omega_{\text{\tiny $V_{t,\delta}$}}|  = \int_{\Omega} \div_g W d x= \int_{\partial \Omega} \langle W , \eta \rangle_g d \sigma = |\partial \Omega|>0.
$$
Hence the implicit function theorem yields the existence of $\eps_0>0$ and a $C^2$-function $(-\eps_0,\eps_0) \to \R$, $\:t \mapsto \delta(t)$ such that, setting $V_t:= (t \tilde h + \delta (t))W \in \cV(\M)$, we have $|\Omega_{\text{\tiny $V_t$}}|= |\Omega|$ for $t \in (-\eps_0,\eps_0)$ and thus, again by Lemma \ref{lemma-appendix}, 
$$
0 = \partial_t \Big |_{t=0}|\Omega_{\text{\tiny $V_t$}}|
= \int_{\Omega} \div_g [(\tilde h  +  \dot \delta(0))W] d x= \int_{\partial \Omega} (h  +  \dot \delta(0)) d \sigma = \dot \delta(0) |\partial \Omega|.
$$
We conclude that $\dot \delta(0)=0$. Hence $\partial_t \Bigl|_{t=0} V_t =  \tilde h W$, which coincides with $h \eta$ on $\partial \Omega$. Moreover, if 
$h \not \equiv 0$, we may make $\eps_0>0$ smaller if necessary to guarantell that $\Omega_{\V_t}  \not = \Omega$ for every $t \in (-\eps_0,\eps_0)$. Hence the claim holds.

We now consider the case where $\int_{\Gamma} h\,d\sigma = 0$ for every connected component  $\G\subset \partial \Omega$, as assumed in (ii).
Making $\eps>0$ smaller if necessary, we may assume, by the compactness of $\partial \Omega$, that the set 
$U_\eps(\partial \Omega)$ has finitely many connected components $A_1,\dots,A_n$. For $i=1,\dots,n$, let $\Gamma_i:= \partial \Omega \cap A_i$, and let $W_i \in \cV(\M)$ be a vector field supported in $A_i$ which coincides with the outer unit normal $\eta$ on $\Gamma_i$. 

Similarly as above, the implicit function theorem yields the existence of $\eps_0>0$ and a $C^1$-function 
$$
(-\eps_0,\eps_0) \to \R^n, \qquad t \mapsto \delta(t)= (\delta_1(t),\dots,\delta_n(t))
$$
such that, setting $V_t:= \sum \limits_{i=1}^{n} (t \tilde h + \delta_i(t))W_i \in \cV(\M)$, we have    
$$
|\Omega_{\text{\tiny $V_t$}} \cap A_i|= |\Omega \cap A_i|
\qquad \text{for $i=1\dots,n,\; t \in (-\eps_0,\eps_0)$}.
$$
Moreover, making $\eps_0$ smaller if necessary, we may assume that 
$$
(\Omega_{\text{\tiny $V_t$}} \setminus \Omega) \cup (\Omega \setminus \Omega_{\text{\tiny $V_t$}}) \subset U_\eps(\partial \Omega) \qquad \text{for $t \in (-\eps_0,\eps_0)$}.
$$
Lemma \ref{lemma-appendix} then implies that 
\begin{align*}
0 = \partial_\eps \Big |_{\eps=0}|\Omega_{\text{\tiny $V_t$}} \cap A_i|
&= \int_{\Omega \cap A_i} \! \div_g \Bigl(\sum_{j=1}^n [(\tilde h  +  \dot \delta_j(0))W_j]\Bigr) d x
= \int_{\Omega \cap A_i} \! \div_g \bigl[(\tilde h  +  \dot \delta_i(0))W_i\bigr] d x\\ 
&= \int_{\Gamma_i} \langle (h  +  \dot \delta_i(0))W_i , \eta \rangle_g d \sigma = \int_{\Gamma_i} h d\sigma + \dot \delta_i(0) |\Gamma_i|. 
\end{align*}
Since $\int_{\Gamma_i} h d\sigma= 0$ for $i=1,\dots,n$, we conclude that $\dot \delta_i(0)= 0$ for $i=1,\dots,n$. Consequently, we have
$\partial_t \Bigl|_{t=0} V_t = \sum \limits_{i=1}^{n}  \tilde h W_i$, and the RHS coincides with $h \eta$ on $\partial \Omega$. If $h \not \equiv 0$, we may again make $\eps_0>0$ smaller if necessary to guarantell that $\Omega_{\V_t}  \not = \Omega$ for every $t \in (-\eps_0,\eps_0)$. The claim follows.
\end{proof}

\section{A variant of Zanger's domain variation formula and its consequences}

In this section we extend Zanger's formula for the domain dependance of Neumann eigenvalues in the case of the variational eigenvalue $\mu_2$. Note that, in the case where $\mu_2(\Omega)$ is not a simple eigenvalue, $\mu_2$ is usually not a differentiable with respect to regular variations of $\Omega$. Nevertheless, the following one-sided derivative can be calculated. 

\begin{Proposition}
\label{zanger-formula} 
Let $\Omega \in \cO^2(\M)$, and let $(-\eps_0,\eps_0) \to \cV^1(\M)$, $\eps \mapsto V_\eps$ be a $C^1$-curve with $V_0=0$ and $V:= \partial_\eps\Big|_{\eps=0} V_\eps$. Then we have 
\begin{equation}
  \label{eq:zanger-generalized}
\partial_\eps^+ \Big|_{\eps=0} \mu_2(\Omega_{V_\eps}) = 
\min \Bigl \{\int_{\partial \Omega} (|\nabla u|^2- \mu_2(\Omega) u^2)\langle V,\eta  \rangle_g\,d\sigma\::\: 
u \in L\Bigr \},
\end{equation}
where $L \subset C^1(\overline \Omega)$ is the set of all Neumann eigenfunctions $u$ of $-\Delta_g$ on $\Omega$ corresponding to the eigenvalue $\mu_2(\Omega)$ with $\int_{\Omega}u^2\,dx = 1$.
\end{Proposition}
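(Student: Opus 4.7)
The plan is to recast $\mu_2(\Omega_{V_\eps})$ as an infimum over the fixed domain $\Omega$ via the pullback metric $g_\eps := \tau_{V_\eps}^* g$ from Lemma \ref{lemma-appendix}, and then sandwich the one-sided derivative between matching upper and lower bounds. Writing $\mu_\eps := \mu_2(\Omega_{V_\eps})$ and $\mu_* := \mu_2(\Omega)$, composition with $\tau_{V_\eps}$ gives
\begin{equation*}
\mu_\eps = \inf \Bigl\{ R_\eps(u) \::\: u \in H^1(\Omega)\setminus\{0\},\; \int_\Omega u\, dv_{g_\eps} = 0 \Bigr\}, \qquad R_\eps(u) := \frac{\int_\Omega |\nabla u|^2_{g_\eps}\, dv_{g_\eps}}{\int_\Omega u^2\, dv_{g_\eps}}.
\end{equation*}
From the expansions in Lemma \ref{lemma-appendix} I would first derive, for any $u \in C^2(\overline\Omega)$ solving $-\Delta_g u = \mu_* u$ with $\partial_\eta u = 0$ on $\partial \Omega$ and $\int_\Omega u^2\, dv_g = 1$, the Hadamard-type identity
\begin{equation*}
R_\eps(u) = \mu_* + \eps \int_{\partial \Omega} (|\nabla u|^2 - \mu_* u^2)\langle V, \eta\rangle_g\, d\sigma + o(\eps) \qquad (\eps \to 0^+),
\end{equation*}
obtained by expanding numerator and denominator and integrating the first-order bulk terms (involving $|\nabla u|^2 \div_g V$ and $\langle \nabla_{\nabla u} V, \nabla u\rangle_g$) by parts against the eigenvalue equation and the Neumann condition.

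\textbf{Upper bound.} Given $u \in L$, I would set $c_\eps := |\Omega|_{g_\eps}^{-1}\int_\Omega u\, dv_{g_\eps}$; Lemma \ref{lemma-appendix} together with $\int_\Omega u\, dv_g = 0$ yields $c_\eps = O(\eps)$. The shifted function $w_\eps := u - c_\eps$ is admissible in the variational problem defining $\mu_\eps$, and since $c_\eps$ is constant, $\int_\Omega |\nabla w_\eps|^2_{g_\eps} dv_{g_\eps}$ equals the numerator of $R_\eps(u)$, while the denominator merely decreases by $c_\eps^2 |\Omega|_{g_\eps} = O(\eps^2)$. Hence $R_\eps(w_\eps) = R_\eps(u) + O(\eps^2)$, and the Hadamard identity above gives
\begin{equation*}
\mu_\eps \le \mu_* + \eps \int_{\partial \Omega}(|\nabla u|^2 - \mu_* u^2)\langle V, \eta\rangle_g\, d\sigma + o(\eps).
\end{equation*}
Since $L$ is compact (as $\mu_*$ has finite multiplicity) and the integrand on the right is continuous on $L$, the minimum is attained, and taking the infimum over $u \in L$ yields the $\limsup$-bound with exactly this minimum on the right-hand side.

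\textbf{Lower bound.} For each small $\eps>0$ I would pick a minimizer $u_\eps$ of the pullback Rayleigh quotient, normalized by $\int_\Omega u_\eps^2\, dv_{g_\eps} = 1$ and $\int_\Omega u_\eps\, dv_{g_\eps} = 0$. Boundedness of $\mu_\eps$ gives an $H^1$-bound on $u_\eps$, so along a subsequence $u_\eps \weak u_0$ in $H^1(\Omega)$ and strongly in $L^2(\Omega)$. Passing to the limit in the constraints via Lemma \ref{lemma-appendix} shows $\int_\Omega u_0\, dv_g = 0$ and $\int_\Omega u_0^2\, dv_g = 1$; weak lower semicontinuity combined with $R_0(u_\eps) = R_\eps(u_\eps) + O(\eps) \to \mu_*$ then forces $R_0(u_0) = \mu_*$, so $u_0 \in L$ and, by the Hilbert structure, $u_\eps \to u_0$ strongly in $H^1(\Omega)$. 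Expanding at the moving minimizer and exploiting continuity in $H^1$ of the first-order bulk correction,
\begin{equation*}
\mu_\eps = R_\eps(u_\eps) = R_0(u_\eps) + \eps \int_{\partial\Omega}(|\nabla u_0|^2 - \mu_* u_0^2)\langle V, \eta\rangle_g\, d\sigma + o(\eps),
\end{equation*}
where the boundary integral arises from the same Hadamard integration by parts applied to $u_0 \in L$. Since $u_\eps$ has mean $O(\eps)$ with respect to $dv_g$, its Rayleigh quotient in the unperturbed problem satisfies $R_0(u_\eps) \ge \mu_* - O(\eps^2)$; dividing by $\eps$ and taking $\liminf$ yields the lower bound, and because $u_0 \in L$ this lower bound is at least the minimum in \eqref{eq:zanger-generalized}, matching the $\limsup$-estimate.

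\textbf{Main obstacle.} The crucial difficulty lies in the lower bound: because $\mu_\eps$ is generally not differentiable at $\eps = 0$ when $\mu_*$ is degenerate, the element $u_0 \in L$ realizing the minimum in \eqref{eq:zanger-generalized} cannot be fixed in advance but must be produced as an $H^1$-limit of minimizers $u_\eps$. Two steps require particular care: (i) upgrading the weak $H^1$-convergence of $u_\eps$ to strong convergence, which relies on the near-admissibility of $u_\eps$ in the unperturbed variational problem; and (ii) controlling the error $R_\eps(u_\eps) - R_0(u_\eps)$ to order $o(\eps)$ uniformly in $\eps$, which uses continuity of the first-order correction forms with respect to $H^1$-convergence together with the locally uniform remainder estimate in Lemma \ref{lemma-appendix}.
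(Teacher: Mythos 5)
Your proof is correct, and the upper-bound half mirrors the paper's argument exactly: pull back the metric, test the Rayleigh quotient with each $u\in L$ shifted by its $O(\eps)$-mean, and compute the Hadamard derivative via the expansions of Lemma~\ref{lemma-appendix} together with the integration by parts in~\eqref{int-by-parts}. The lower-bound halves, however, differ in technique. The paper argues by contradiction, pulls back a normalized eigenfunction $u^{\eps}$ of $\Omega_{\eps}$ to $u_\eps$ on $\Omega$, invokes elliptic $C^{1,\alpha}$-regularity for $-\Delta_{g_\eps}$ (with locally Lipschitz coefficients) to extract a $C^1$-convergent subsequence $u_\eps\to w$, and then pairs the equation $-\Delta_{g_\eps}u_\eps=\mu_2(\Omega_\eps)u_\eps$ \emph{bilinearly} against the fixed limit $w$; this puts the first-order metric correction in a form bilinear in $(\nabla u_\eps,\nabla w)$, so passing to the limit is cheap, and the bulk-to-boundary conversion needs only the eigenvalue equation for $w$, never for $u_\eps$. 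You instead stay purely variational: you bound the Rayleigh minimizers $u_\eps$ in $H^1$, extract a weak $H^1$/strong $L^2$ limit $u_0$, identify $u_0\in L$ from the constraints, upgrade to strong $H^1$-convergence via the norm trick $\|\nabla u_\eps\|_{L^2}\to\|\nabla u_0\|_{L^2}=\sqrt{\mu_2(\Omega)}$, and then appeal to continuity of the \emph{quadratic} first-order form under strong $H^1$-convergence before integrating by parts at $u_0$. Both routes close the gap. The paper's bilinear pairing asks for less convergence in principle and sidesteps the question of whether the $o(\eps)$ metric remainder is uniform with respect to the test function; your route avoids elliptic regularity entirely, but then that uniformity (for $H^1$-bounded families) and the continuity of the first-order quadratic form are exactly the two points you must -- and correctly do -- flag as requiring verification: both hold because the expansions in Lemma~\ref{lemma-appendix} are uniform in $x$ on the compact set $\overline\Omega$ and the first-order bulk correction is a bounded quadratic form in $(\,u,\nabla u\,)$.
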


\begin{proof}
We start with some preliminary considerations. We first simplify the notation defined in the introduction, 
writing $\Omega_{\eps}$ in place of $\Omega_{\text{\tiny $V_\eps$}}$ and $\tau_\eps$ in place of $\tau_{\text{\tiny $V_\eps$}}$ for $\eps \in (-\eps_0,\eps_0)$. In the following, we let $g_\eps$ denote the pull back of the metric $g$ under the map $\tau_\eps$. Since $\tau_\eps : (M, g_\eps) \to (M,g)$ is an isometry and $\Omega_\eps= \tau_\eps(\Omega)$, the variational characterization for $\mu_2(\Omega_\eps)$ can be rewritten as 
\begin{equation}
  \label{eq:var-char-mu-2-rewritten}
\mu_2(\Omega_\eps) = \inf \Bigl\{\frac{\int_{\Omega} | \nabla_{g_\eps} u|^2_{g_\eps}\,dv_{g_\eps}}{\int_{\Omega}[u- m(u,\eps)]^2\,dv_{g_\eps}}\::\: u \in H^1(\Omega), u \not \equiv {\rm const} \Bigr\},
\end{equation} 
where 
$$
m(u,\eps)= \frac{1}{|\Omega|_{g_\eps}} \int_{\Omega} u \,dv_{g_\eps} \qquad \text{for $u \in H^1(\Omega)$}
$$
and $dv_{g_\eps}$ denotes the volume element with respect to the metric $g_\eps$. To prove the assertion, we thus need to use the expansions for the metric $g_\eps$ derived in Lemma~\ref{lemma-appendix}. For vector fields $w,z$ defined in $\overline \Omega$, locally written 
as $w = w^i \partial_i$, $z = z^j \partial_j$, (\ref{expansion-1-app}) gives rise to the expansion 
\begin{align}
\langle w, z\rangle_{g_\eps} & = g_{\e\!,\,ij}\: w^i z^j = \langle w, z\rangle_{g} +
\eps \langle \nabla_{\partial_j} V, \partial_i \rangle w^i z^j + \eps \langle \nabla_{\partial_i} V, \partial_j \rangle w^i z^j +o(\eps), \nonumber\\
&=\langle w, z\rangle_{g} +
\eps \langle \nabla_{Z} V, W  \rangle_g  + \eps \langle \nabla_{W} V, Z \rangle_g  +o(\eps).\label{expansion-1-1}
\end{align}
Simply writing, as before, $\nabla f$ in place of $\nabla_g f$ for a smooth function $f: \overline \Omega \to \R$ in the following, we also deduce from \eqref{expansion-2-app}
that \begin{align}
\langle \nabla_{g_\eps} f ,  \nabla_{g_\eps} h \rangle_{g_\eps} & = g_\eps^{\;\;ij}\: 
\partial_i f  \partial_j h \nonumber\\
&=\langle \nabla_g f, \nabla_g h\rangle_{g} -
\eps \langle \nabla_{\nabla h} V, \nabla f  \rangle_g  - \eps \langle \nabla_{\nabla f} V, \nabla h \rangle_g  +o(\eps) \label{expansion-2-1}
\end{align}
for smooth functions $f,h: \overline \Omega \to \R$. Moreover, the expansion is uniform when $f,h$ are taken from a bounded set in $C^1(\overline \Omega)$.
In order to establish (\ref{eq:zanger-generalized}), we now first prove that 
\begin{equation}
  \label{eq:zanger-generalized-leq}
\partial_\eps^+ \Big|_{\eps=0} \mu_2(\Omega_\eps) \le 
\min \Bigl \{\int_{\partial \Omega} (|\nabla u|^2- \mu_2(\Omega) u^2)\la V,\eta\ra_g\,d\sigma\::\: 
u \in L\Bigr \}.
\end{equation}
Let $u \in L$. From (\ref{eq:var-char-mu-2-rewritten}) it follows that 
$$
\mu_2(\Omega_\eps) \le \rho_u(\eps) := \frac{\int_{\Omega} | \nabla_{g_\eps} u|^2_{g_\eps}\,dv_{g_\eps}}{\int_{\Omega}[u- m(u,\eps)]^2\,d v_{g_\eps}}\qquad \text{for $|\eps| \le \eps_0$.} 
$$
Since also $\mu_2(\Omega)=\rho_u(0)$, we have 
$\partial_\eps^+ \Big|_{\eps=0} \mu_2(\Omega_\eps) \le \rho_u'(0)$, so the inequality (\ref{eq:zanger-generalized-leq}) follows once we have shown that 
\begin{equation}
  \label{eq:f-prime-0}
\rho_u'(0) = \int_{\partial \Omega} (|\nabla u|^2- \mu_2(\Omega) u^2) \langle V,\eta \rangle_g\,d\sigma \qquad \text{for $u \in L$.}  
\end{equation}
By expansions (\ref{eq:expansion-3-app}) and (\ref{expansion-2-1}), we have that  
\begin{equation}
\int_{\Omega} |\nabla_{g_\eps} u |_{g_\eps}^2 dv_{g_\eps} = \int_{\Omega} |\nabla u|_g^2\,dx + \eps 
\int_{\Omega} \Bigl(|\nabla u|^2 \div_g V-  2 \langle \nabla_{\nabla u} V, \nabla u  \rangle_g \Bigr)\,dx   +o(\eps) \label{expansion-2-1-1}
\end{equation} 
and therefore, via integration by parts, 
\begin{align*}
\partial_\eps &\Big|_{\eps=0} \int_{\Omega} |\nabla_{g_\eps} u |_{g_\eps}^2 dv_{g_\eps}
= \int_{\Omega} (|\nabla u|^2 \div_g V  - 2 \la \nabla u , \nabla_{\nabla u}V\ra_g \,dx\\
&= \int_{\partial \Omega} |\nabla u|^2 \langle V, \eta  \rangle_g d\sigma 
  - 2 \int_{\Omega} \Bigl(\langle \nabla_{\nabla u} \nabla u , V \rangle_g +
\langle \nabla u , \nabla_{\nabla u}V \rangle_g
 \Bigr) \,dx.
\end{align*} 
Since $\nabla u \cdot \eta  = 0$ on $\partial \Omega$ and $-\Delta u = \mu_2(\Omega) u$ in $\Omega$, we also have that 
\begin{align}
0 &= \int_{\partial \Omega} \langle \nabla u, V \rangle_g \langle \nabla u, \eta \rangle_g \,d\sigma = \int_{\Omega} \div \bigl( \langle \nabla u, V \rangle_g \nabla u\bigr) \,dx \nonumber \\ 
&= \int_{\Omega} \Bigl(\bigl \langle 
\nabla \langle \nabla u,V \rangle_g \nabla u \bigr \rangle_g +   \langle \nabla u, V \rangle_g \Delta u \Bigr)\,dx = \int_{\Omega} \Bigl(\partial_{\,\nabla u}\, \langle \nabla u,V \rangle_g - \mu_2(\Omega) u  \langle \nabla u, V \rangle_g \Bigr)\,dx\nonumber\\
&= \int_{\Omega} \Bigl( \langle \nabla_{\nabla u} \nabla u,V \rangle_g + \langle \nabla u,\nabla_{\nabla_u} V \rangle_g  - \mu_2(\Omega) u  \langle \nabla u, V 
\rangle_g \Bigr)\,dx  \label{int-by-parts}
\end{align}
and thus 
$$
\partial_\eps \Big|_{\eps=0} \int_{\Omega} |\nabla_{g_\eps} u |_{g_\eps}^2 dv_{g_\eps}= \int_{\partial \Omega} |\nabla u|_g^2 \langle V,\nu \rangle_g\, d\sigma 
  - 2\mu_2(\Omega) \int_{\Omega} u  \langle \nabla u, V \rangle_g \,dx.
$$
Using (\ref{eq:expansion-3-app}),  we also see that 
\begin{equation*}
\partial_\eps \Big|_{\eps=0} \int_{\Omega} u^2 dv_{g_\eps} = \int_{\Omega} u^2 \div_g V \,dx= \int_{\partial \Omega} u^2 \langle V, \eta  \rangle_g \,d\sigma - 2 \int_{\Omega} u \langle \nabla u \cdot V \rangle_g\,dx.
\end{equation*}
Moreover, since $m(u,0)=0$, we have $\partial_\eps \big|_{\eps=0}\bigl[ |\Omega_\eps| \, m^2(u,\eps)\bigr] =0$ and therefore 
\begin{align*}
\partial_\eps \Big|_{\eps=0} \int_{\Omega} [u- m(u,\eps)]^2\,d v_{g_\eps}&= \partial_\eps \Big|_{\eps=0} \Bigl(\int_{\Omega} u^2\,dx -
|\Omega_\eps| m^2(u,\eps)\Bigr)=\partial_\eps \Big|_{\eps=0} \int_{\Omega} u^2\,dv_{g_\eps}\\ 
&=\int_{\partial \Omega} u^2 \langle V, \eta \rangle_g \,d\sigma - 2 \int_{\Omega} u \langle \nabla u \cdot V \rangle_g\,dx
\end{align*}
by (\ref{eq:expansion-3-app}) and integration by parts. Combining the above identities, we find that 
\begin{align*}
\rho_u'(0) &= \Bigl(\int_{\Omega}u^2\,dx\Bigr)^{-2}\Bigl[\int_{\partial \Omega} |\nabla u|^2 \langle V, \eta \rangle_g \,d\sigma    - 2\mu_2(\Omega) \int_{\Omega} u  \langle \nabla u, V \rangle_g \,dx \\
&- \int_{\Omega}|\nabla u|^2\,dx \Bigl(\int_{\partial \Omega} u^2 \langle V \cdot \eta  \rangle_g \,d\sigma - 2 \int_{\Omega} u \langle \nabla u, V \rangle_g\,dx\Bigr)\Bigr]\\
&= \Bigl(\int_{\Omega}u^2\,dx\Bigr)^{-2}\Bigl[\int_{\partial \Omega} |\nabla u|^2  \langle V, \eta \rangle_g \,d\sigma    - 2 \mu_2(\Omega) \int_{\Omega} u   \langle \nabla u, V \rangle_g \,dx \\
&- \mu_2(\Omega) \Bigl(\int_{\partial \Omega} u^2 \langle V,\eta \rangle_g \,d\sigma - 2 \int_{\Omega} u \langle \nabla u, V \rangle_g \,dx\Bigr)\Bigr]= \int_{\partial \Omega} (|\nabla u|^2- \mu_2(\Omega) u^2) \langle V, \eta  \rangle_g \,d\sigma,
\end{align*}
as claimed in (\ref{eq:f-prime-0}). We thus conclude that (\ref{eq:zanger-generalized-leq}) holds.
Next, to show the opposite inequality in (\ref{eq:zanger-generalized}), we argue by contradiction. Hence we suppose that 
\begin{equation}
  \label{eq:zanger-generalized-geq-contra-1}
\liminf_{\eps  \to 0^+}\frac{\mu_2(\Omega_{\eps})-\mu_2(\Omega)}{\eps} < \kappa_\Omega:=
\min_{u \in L} \int_{\partial \Omega} (|\nabla u|^2- \mu_2(\Omega) u^2) \langle V,\eta \rangle_g \,d\sigma,
\end{equation}
which means there exists a sequence of positive numbers $\eps_k$, $k \in \N$ with $\eps_k \to 0$ and such that 
\begin{equation}
  \label{eq:zanger-generalized-geq-contra-2}
\lim_{k \to \infty}\frac{\mu_2(\Omega_{\eps_k})-\mu_2(\Omega)}{\eps_k} <  \kappa_{\Omega}.
\end{equation}
For the ease of notation, we simply write $\eps$ in place of $\eps_k$ in the following. Moreover, we let $u^\eps$ denote an $L^2$-normalized eigenfunction on $\Omega_\eps$ corresponding to the eigenvalue $\mu_2(\Omega_\eps)$. Using again the fact that the map $\tau_\eps : (M, g_\eps) \to (M,g)$ is an isometry, we find that the functions $u_\eps \in C^2(\overline \Omega)$, $u_\eps:= u^\eps \circ \tau_\eps$ satisfy 
$$
-\Delta_{g_\eps} u_\eps = \mu_2(\Omega_\eps) u_\eps \quad \text{in $\Omega$,}\qquad \partial_{\eta_\eps} u_\eps = 0 \quad \text{on $\partial \Omega$.}
$$
Here $\eta_\eps$ denotes the outer normal on $\partial \Omega$ with respect to the metric $g_\eps$. By elliptic regularity (using the fact that the coefficients of $g_\eps$ are locally uniformly Lipschitz), it follows that the sequence $(u_\eps)_\eps$ remains bounded in $C^{1,\alpha}(\overline{\Omega})$, and thus $u_\eps \to w$ in $C^1(\overline \Omega)$ after passing to a subsequence. Integrating by parts and 
using the expansions (\ref{eq:expansion-3-app}) and (\ref{expansion-2-1}) again, we thus infer that  
\begin{align*}
&\mu_2(\Omega_\eps) \int_{\Omega}u_\eps w\,d v_{g_\eps} = 
- \int_{\Omega} (\Delta_{g_\eps} u_\eps) w\,dx=\int_{\Omega} \langle \nabla_{g_\eps} u_\eps, \nabla_{g_\eps} w \rangle_{g_\eps} \,dv_{g_\eps}\\
&=  \int_{\Omega} \nabla u_\eps  \cdot \nabla w  \,dx + \eps \int_{\Omega} \Bigl( \langle \nabla u_\eps, \nabla w \rangle_g \div_g V -  \langle \nabla_{\nabla u_\eps} V, \nabla u_\eps \rangle_g -  \langle 
\nabla_{\nabla w} V, \nabla u_\eps \rangle_g\Bigr) \,dx + o(\eps)\\
&=  \mu_2(\Omega) \int_{\Omega} u_\eps w  \,dx + \eps \int_{\Omega} \Bigl(|\nabla w|^2\, \div_g V -2  \langle 
\nabla_{\nabla w} V, \nabla w  \rangle_g\Bigr) \,dx + o(\eps)\\
&=  \mu_2(\Omega) \Bigl[\int_{\Omega} u_\eps w dv_{g_\eps} - \eps \int_\Omega u_\eps w\, \div_g V dx\Bigr] + \eps \int_{\Omega} \!\Bigl(|\nabla w|^2\, \div_g V -  2  \langle 
\nabla_{\nabla w} V, \nabla w  \rangle_g \Bigr)dx + o(\eps)
\end{align*}
Using also that 
$$
\int_{\Omega} u_\eps w \,dv_{g_\eps} =
\int_{\Omega} w^2 \,dx +o(1)= 1 + o(1) \quad \text{and}\quad 
\int_\Omega u_\eps w\, \div_g V\,dx = \int_\Omega w^2 \div_g V\,dx + o(1) 
$$
we conclude that 
\begin{align*}
&\mu_2(\Omega_\eps) = \mu_2(\Omega) \\ 
&+ \eps \Bigl(\int_{\Omega} u_\eps w \,dv_{g_\eps}\Bigr)^{-1}\Bigl(
\int_{\Omega} \Bigl( |\nabla w|^2 \div_g V- \mu_2(\Omega)w^2 \div_g V -
2  \langle 
\nabla_{\nabla w} V, \nabla w  \rangle_g \Bigr)\,dx +o(\eps)\\
&= \mu_2(\Omega) + \eps \int_{\Omega} \Bigl( |\nabla w|^2 \div_g V- \mu_2(\Omega)w^2 \div_g V - 2  \langle 
\nabla_{\nabla w} V, \nabla w  \rangle_g  \Bigr)\,dx +o(\eps).
\end{align*}
Integrating by parts, we thus find that 
\begin{align*}
&\frac{\mu_2(\Omega_\eps)- \mu_2(\Omega)}{\eps}= 
\int_{\Omega} \Bigl(|\nabla w|^2 \div_g V- \mu_2(\Omega)w^2 \div_g V  -2  \langle 
\nabla_{\nabla w} V, \nabla w  \rangle_g   \Bigr)\,dx +o(1)\\
&= \int_{\partial \Omega} \Bigl(|\nabla w|^2 - \mu_2(\Omega)w^2 \Bigr) \langle V, \eta \rangle_g \,d\sigma \\
&+ 2 \int_{\Omega} \Bigl(\mu_2(\Omega) w \langle \nabla w,  V \rangle_g -   \langle 
\nabla_{\nabla w} V, \nabla w  \rangle_g  - \langle \nabla_{\nabla w} \nabla w,V \rangle_g \Bigr)\,dx +o(1)\\
&= \int_{\partial \Omega} \Bigl(|\nabla w|^2 - \mu_2(\Omega)w^2 \Bigr) \langle V, \eta  \rangle_g \,d\sigma +o(1) \ge \kappa_\Omega + o(1),
\end{align*}
where we used \eqref{int-by-parts} with $w$ in place of $u$.
Recalling that this holds for a subsequence of the sequence $(\eps_k)_k$ for which we assumed (\ref{eq:zanger-generalized-geq-contra-2}), we thus get a contradiction. We conclude that both $\le$ and $\ge$ holds in (\ref{eq:zanger-generalized}), and thus the proof is finished.
\end{proof}

\begin{Corollary}
\label{overdetermined-corollary}
Let $\Omega$ and $L$ be as in Proposition \ref{zanger-formula}. Then we have the following. 
\begin{enumerate}
\item[(i)] If $\Omega$ is a local minimum with respect to domain variations, then the quantity $|\nabla u|^2- \mu_2(\Omega) u^2$ is constant on $\partial \Omega$  for all $u \in L$. In particular, $\Omega$ is a constraint critical point for $\mu_2$ in strong sense.  
\item[(ii)] If $\Omega$ is a local maximum with respect to domain variations, then $\Omega$ is a constraint critical point for $\mu_2$ in weak sense.
\end{enumerate}
\end{Corollary}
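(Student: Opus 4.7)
Both parts follow from applying Proposition~\ref{zanger-formula} along the admissible deformation curves supplied by Lemma~\ref{sec:technical-lemma-1}(i). Write $F(u) := |\nabla u|^2 - \mu_2(\Omega)u^2$ on $\partial\Omega$. Given any $h \in C^1(\partial\Omega)$ with $\int_{\partial\Omega} h\,d\sigma = 0$, Lemma~\ref{sec:technical-lemma-1}(i) produces a $C^1$-curve $t \mapsto V_t$ of admissible deformation fields with $V_0 = 0$ and $V := \partial_t|_{t=0}V_t \equiv h\eta$ on $\partial\Omega$, so Proposition~\ref{zanger-formula} gives
\[
\partial_t^+|_{t=0}\, \mu_2(\Omega_{V_t}) \;=\; \min_{u \in L}\int_{\partial\Omega} h\,F(u)\,d\sigma.
\]
For part (i), local minimality forces the left-hand side to be $\ge 0$ for every such $h$; replacing $h$ by $-h$ interchanges the minimum with the maximum and flips the sign, so $\int_{\partial\Omega} h F(u)\,d\sigma = 0$ for every $u \in L$ and every mean-zero $h \in C^1(\partial\Omega)$. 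Choosing $h := F(u) - \tfrac{1}{|\partial\Omega|}\int_{\partial\Omega} F(u)\,d\sigma$ then forces $F(u)$ to be constant on $\partial\Omega$ for each $u \in L$, which is strong criticality.

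For part (ii), the same argument with the opposite inequality yields only
\[
\min_{u \in L}\int_{\partial\Omega} h F(u)\,d\sigma \;\le\; 0 \;\le\; \max_{u \in L}\int_{\partial\Omega} h F(u)\,d\sigma
\]
for every mean-zero $h$, so no individual $u \in L$ need make $F(u)$ constant. The plan is to exploit this via a finite-dimensional Hahn--Banach argument in the space $S^d$ of symmetric $d\times d$ matrices, where $d = \dim E$ and $E$ is the $\mu_2(\Omega)$-eigenspace. Fix an $L^2$-orthonormal basis $\phi_1,\dots,\phi_d$ of $E$ and define the symmetric matrix-valued function $M$ on $\partial\Omega$ by $M_{ij}(x) := \langle\nabla\phi_i,\nabla\phi_j\rangle_g(x) - \mu_2(\Omega)\phi_i(x)\phi_j(x)$, so $F(u)(x) = c^T M(x) c$ whenever $u = \sum_i c_i\phi_i \in L$. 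The linear map $T(h) := \int_{\partial\Omega} M\,h\,d\sigma$ from mean-zero $C^1$-functions into $S^d$ then has image disjoint from the open cone $S^d_{++}$ of positive definite matrices, since $T(h) \in S^d_{++}$ would yield $\min_{u \in L}\int_{\partial\Omega} h F(u)\,d\sigma \ge \lambda_{\min}(T(h)) > 0$, contradicting the local-maximality condition.

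The main obstacle is now dispatched by Hahn--Banach separation in $S^d$ equipped with the trace inner product: since $\mathrm{Im}(T)$ is a subspace disjoint from the open convex set $S^d_{++}$, there exists a nonzero $A \in S^d$ with $\mathrm{tr}(AB) = 0$ for every $B \in \mathrm{Im}(T)$ and $\mathrm{tr}(AB) \ge 0$ for every $B \in S^d_{++}$. The second condition extends by continuity to the closure of $S^d_{++}$, and the self-duality of the positive semi-definite cone then forces $A$ to be positive semi-definite; the first condition says $\int_{\partial\Omega} h \cdot \mathrm{tr}(AM)\,d\sigma = 0$ for every admissible $h$, so $\mathrm{tr}(AM)$ is constant on $\partial\Omega$. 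Diagonalizing $A = \sum_k a_k v^{(k)}(v^{(k)})^T$ with $a_k > 0$ and setting $u_k := \sqrt{a_k}\sum_i v_i^{(k)}\phi_i \in E\setminus\{0\}$ yields
\[
\sum_k F(u_k) \;=\; \mathrm{tr}(AM) \;\equiv\; \lambda \qquad \text{on $\partial\Omega$,}
\]
which is precisely weak criticality. The delicate point throughout is identifying the correct dual pairing in (ii); once framed on $S^d$, the self-duality of the PSD cone delivers the conclusion essentially for free.
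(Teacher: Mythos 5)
Your part~(i) is essentially identical to the paper's argument, so I focus on part~(ii), where your route is genuinely different. The paper works directly in function space: it forms the convex hull $K$ of the set $K_0 := \bigl\{(|\nabla u|^2 - \mu_2(\Omega)u^2)\big|_{\partial\Omega} : u \in L\bigr\}$ inside the finite-dimensional subspace $E_0 \subset L^2(\partial\Omega)$ spanned by the boundary traces of the pairwise quantities $\langle\nabla u_i,\nabla u_j\rangle_g - \mu_2(\Omega)u_iu_j$, shows by Mazur's separation theorem that $K$ must meet the line of constants, and then invokes Carath\'eodory's theorem to write a point of $K \cap P$ as a finite convex combination. You instead linearize the quadratic dependence on $u$ by packaging the same data into the symmetric-matrix-valued map $M$ on $\partial\Omega$, observe that $T(h) = \int_{\partial\Omega} Mh\,d\sigma$ has image in $S^d$ missing the open cone $S^d_{++}$, and separate that subspace from the cone; self-duality of the PSD cone then makes the separating matrix $A$ positive semidefinite, and its spectral decomposition hands you the eigenfunctions $u_k$ directly with no need for Carath\'eodory. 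Both arguments are finite-dimensional convex separation, but yours is cleaner in that the convex body you separate against is the canonical PSD cone rather than an ad hoc convex hull, and it yields the sharper bound $m \le \ell$ (the number of positive eigenvalues of $A$, at most $\dim L$) on the number of eigenfunctions in the weak-criticality identity, improving the bound $m \le \tfrac{\ell(\ell+1)}{2}+1$ noted in Remark~\ref{remark-caratheodory}. One small technical remark for part~(i): to take $h := F(u) - \tfrac{1}{|\partial\Omega|}\int_{\partial\Omega} F(u)\,d\sigma$ you should first pass, as the paper does in part~(ii), from mean-zero $C^1$ test functions to mean-zero $L^2$ test functions by density, or simply note that a continuous function orthogonal to all mean-zero $C^1$ functions on a compact manifold must be constant.
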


\begin{proof}
(i) If suffices to show that 
\begin{equation}
  \label{eq:suffice-minimum}
\int_{\partial \Omega} \Bigl(|\nabla u|^2 - \mu_2(\Omega)u^2 \Bigr)h\,d\sigma = 0 
\end{equation}
for every $u \in L$ and every $C^1$-function $h: \partial \Omega \to \R$ with 
$\int_{\partial \Omega} h d \sigma = 0$. Fix such a function $h$, and consider the corresponding $C^1$-curve $(-\eps_0,\eps_0) \to \cV^1(\M), \quad t \mapsto V_t$ given by Lemma~\ref{sec:technical-lemma-1}(i). Combining the assumption with Proposition~\ref{zanger-formula} and Lemma~\ref{sec:technical-lemma-1}(i), we then deduce that 
$$
0 \le \partial_t^+ \Big|_{t=0}\: \mu_2(\Omega_{\text{\tiny $V_t$}}) = \min \Bigl \{\int_{\partial \Omega} (|\nabla u|^2- \mu_2(\Omega) u^2) h \,d\sigma\::\: 
u \in L\Bigr \}.
$$
Replacing $h$ by $-h$, we then also deduce that   
$$
\max \Bigl \{\int_{\partial \Omega} (|\nabla u|^2- \mu_2(\Omega) u^2)h\,d\sigma\::\: 
u \in L\Bigr \} \le 0,
$$
and thus (\ref{eq:suffice-minimum}) follows.\\
(ii) By the same argument as in the proof of (i), we see that 
$$
\min \Bigl \{\int_{\partial \Omega} (|\nabla u|^2- \mu_2(\Omega) u^2)h \,d\sigma\::\: 
u \in L\Bigr \} \le 0 
$$
for all $C^1$-functions $h: \partial \Omega \to \R$ with $\int_{\partial \Omega}h d \sigma = 0$. By density, this yields, 
\begin{equation}
  \label{eq:cons-maximum}
\min \Bigl \{\int_{\partial \Omega} (|\nabla u|^2- \mu_2(\Omega) u^2)h\,d\sigma\::\: u \in L\Bigr \} \le 0 \quad \text{for $h\in L^2(\de\O)$ with $\int_{\de\O} h\,d\s=0$.}
\end{equation}
We now consider the set 
$K \subset L^2(\partial \Omega)$ given as the convex hull of the set  
$$
K_0:= \Bigl \{ \Bigl(|\nabla u|^2- \mu_2(\Omega) u^2\Bigr)\Big|_{\partial \Omega} \::\: u \in L \Bigr \}.
$$
Since $K_0$ is a compact set contained in the finite dimensional space $E_0 \subset L^2(\partial \Omega)$ spanned by 
\be 
\label{span-functions} 
\Bigl(|\nabla u_i|^2- \mu_2(\Omega) u_i^2 \Bigr)\Big|_{\partial \Omega}, \qquad \Bigl(\langle \nabla u_i, \nabla u_j \rangle_g - \mu_2(\Omega) u_i u_j \Bigr)\Big|_{\partial \Omega}, \qquad i,j= 1,\dots, \ell,
\ee
where $u_1,\dots,u_\ell$ denotes a basis of the eigenspace corresponding to $\mu_2(\Omega)$, it follows from Carath\'eodory's theorem that $K$ is compact as well. Let $P \subset L^2(\partial \Omega)$ denote the one-dimensional subspace of constant functions. We claim that 
\begin{equation}
  \label{eq:maximum-sufficient}
K \cap P \not = \varnothing.
\end{equation}
For this we consider the 
the finite dimensional space $E= E_0 + P \subset  L^2(\partial \Omega)$, which is a Hilbert space with  the induced scalar product of $L^2(\de\O)$. Suppose by contradiction that $K \cap P = \varnothing$. Then there exists a convex relatively open neighborhood $\tilde K$ of $K$ in $E$ such that $\tilde K \cap P = \varnothing$. 
By Mazur's separation theorem, there thus exists some function $\tilde h \in E $ such that 
$$
\int_{\de\O} \tilde h  w\, d\s = 0 \quad \text{for $w \in P$}\qquad \text{and}\qquad\int_{\de\O} \  \tilde h w \, d\s > 0 \quad \text{for $w \in  \tilde K $.}
$$
In particular, 
$$
\int_{\de\O} \tilde h  \, d\s  = 0
\qquad \text{and}\qquad \int_{\partial \Omega} (|\nabla u|^2- \mu_2(\Omega) u^2)\tilde h\,d\sigma > 0 \;\qquad \text{for all $u \in L$,}
$$
which contradicts (\ref{eq:cons-maximum}) since $L$ is compact. Hence we conclude that (\ref{eq:maximum-sufficient}) holds. Consequently, there exists $m \in \N$, $\lambda_1,\dots,\lambda_m  \ge 0$ with $\sum \limits_{k=1}^m \lambda_k = 1$ and $u_1,\dots,u_m \in L$ 
such that 
$$
\sum_{k=1}^m \lambda_k (|\nabla u_k|^2- \mu_2(\Omega) u_k^2) = \lambda \qquad \text{on $\partial \Omega$}
$$
with a constant $\lambda \in \R$. Without loss of generality, we may assume here that $\lambda_k \not = 0$ and $u_k \not = 0$ for $k=1,\dots,m$. Replacing $u_k$ by $\sqrt{\lambda_k} u_k$, we thus obtain that 
\begin{equation}
  \label{eq:-msum}
\sum_{k=1}^m (|\nabla u_k|^2- \mu_2(\Omega) u_k^2) = \lambda \qquad \text{on $\partial \Omega$},
\end{equation}
which means that $\Omega$ is a constrained critical point for $\mu_2$ in weak sense, as claimed. 
\end{proof}

\begin{Remark}
\label{remark-caratheodory}{\rm 
The above proof is, to some extend, inspired by similar arguments in \cite{Nadi} and \cite{Soufi-Ilias}. An inspection of the proof shows that the number $m$ in (\ref{eq:-msum}) can be chosen less than or equal to $\frac{\ell(\ell+1)}{2}+1$, where $\ell$ is the dimension of the eigenspace $L$ corresponding to $\mu_2(\Omega)$. This follows from Carath\'eodory's theorem and the fact that the dimension of the space $E_0$ spanned by the functions in \eqref{span-functions} is less than or equal to $\frac{\ell(\ell+1)}{2}$. It would be interesting to know whether this bound on $m$ is optimal.}     
\end{Remark}

The following Proposition is the second main step in the proofs of Theorem \ref{sec:main-theorem-1}(ii),(iii) and Theorem \ref{sec:special-theorem-1}. 

\begin{Proposition}
\label{proposition-implication-strong-sense}
Let $\Omega \in \cO^2(\M)$ be such that there exists a nontrivial solution of the overdetermined problem 
\begin{equation}
\label{eq:overdetermined-1}
\left \{
\begin{aligned}
-\D_g u&=\mu_2(\Omega) u &&\qquad \text{in $\O$},\\
\partial_\eta u&=0,\quad |\nabla u|^2 -\mu_2(\Omega) u^2 = \lambda    
&&\qquad \text{on $\de \O$,}
  \end{aligned}
\right.
\end{equation}
for  some constant $\lambda \in \R$. Then 
\begin{equation}
  \label{eq:usquare-const}
u^2 \equiv - \frac{\lambda}{\mu_2(\Omega)} >0 \qquad \text{on $\partial \Omega$.} \end{equation}
  In addition, 
 there exists $\eps>0$ such that $\mu_2(\Omega_\V) < \mu_2(\Omega)$ for every 
admissible deformation field $V$ for $\Omega$ which is locally volume preserving in $U_\eps(\partial \Omega)$ and such that $\Omega_\V \not = \Omega$. 
\end{Proposition}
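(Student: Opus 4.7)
The plan is to first pin down the precise boundary behavior of $u$ so that $u$ can be $C^1$-extended across $\partial\Omega$ by locally constant values, and then feed this extension into the variational characterization of $\mu_2(\Omega_V)$ as a test function.

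For the boundary identity $u^2 \equiv -\lambda/\mu_2(\Omega)$, I would work on $\partial\Omega$ directly. Since $\partial_\eta u = 0$, the overdetermined condition reduces to $|\nabla^{\partial\Omega}u|^2 - \mu_2(\Omega)u^2 = \lambda$ on $\partial\Omega$. At a point $p\in\partial\Omega$ where $u^2$ attains its maximum $M$ on the compact set $\partial\Omega$, the identity $\nabla^{\partial\Omega}(u^2) = 2u\nabla^{\partial\Omega}u$ forces either $u(p)=0$ or $\nabla^{\partial\Omega}u(p)=0$. The first alternative would yield $u\equiv 0$ on $\partial\Omega$, and together with $\partial_\eta u = 0$ would make $u$ a solution of $-\Delta_g u = \mu_2(\Omega)u$ with vanishing Cauchy data on $\partial\Omega$, so unique continuation forces $u\equiv 0$ in $\Omega$, contradicting nontriviality. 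Hence $\nabla u(p)=0$ and the overdetermined condition at $p$ gives $M = -\lambda/\mu_2(\Omega)$. Substituting $u^2\le M$ back into the condition yields $|\nabla u|^2 = \lambda + \mu_2(\Omega)u^2\le 0$, so equality must hold throughout, whence $|\nabla u|^2\equiv 0$ and $u^2\equiv -\lambda/\mu_2(\Omega)$ on all of $\partial\Omega$. Strict positivity $-\lambda/\mu_2(\Omega)>0$ follows since $\lambda=0$ would put us back in the forbidden alternative.

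The previous step gives $u|_\Gamma = c_\Gamma\in\{-c,c\}$ with $c:=\sqrt{-\lambda/\mu_2(\Omega)}$ on each connected component $\Gamma\subset\partial\Omega$. Choose $\eps_0>0$ so that each component of $U_{\eps_0}(\partial\Omega)$ meets exactly one such $\Gamma$. Since $u|_\Gamma = c_\Gamma$ and $\nabla u|_\Gamma = 0$, the function $\tilde u$ defined to equal $u$ on $\Omega$ and $c_\Gamma$ on the connected component of $U_{\eps_0}(\partial\Omega)\setminus\overline\Omega$ adjacent to $\Gamma$ is $C^1$ on $\Omega\cup U_{\eps_0}(\partial\Omega)$. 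For $\eps\le\eps_0$ and an admissible deformation field $V$ for $\Omega$ that is locally volume preserving in $U_\eps(\partial\Omega)$ and satisfies $\Omega_V\neq\Omega$, I would use $v := \tilde u - m_V$, with $m_V := |\Omega_V|^{-1}\int_{\Omega_V}\tilde u\,dv_g$, as a test function in the variational characterization \eqref{eq:characterization-mu2} of $\mu_2(\Omega_V)$. Since $\nabla\tilde u\equiv 0$ on $\Omega_V\setminus\Omega$ and the locally-volume-preserving assumption grants $|(\Omega\setminus\Omega_V)\cap A| = |(\Omega_V\setminus\Omega)\cap A|$ componentwise in $U_\eps(\partial\Omega)$, the desired inequality $\int_{\Omega_V}|\nabla v|^2\,dv_g < \mu_2(\Omega)\int_{\Omega_V} v^2\,dv_g$ reduces after direct algebra to
\begin{equation*}
\int_{\Omega\setminus\Omega_V} F\,dv_g \;>\; \mu_2(\Omega)\,m_V^2\,|\Omega_V|,\qquad F := |\nabla u|^2 - \mu_2(\Omega)u^2 - \lambda.
\end{equation*}

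The main obstacle I anticipate is making this last inequality hold uniformly in $V$. The Taylor expansion $u = c_\Gamma - \tfrac{\mu_2(\Omega)c_\Gamma}{2}d^2 + O(d^3)$ near each $\Gamma$, justified by $u|_\Gamma = c_\Gamma$, $\partial_\eta u|_\Gamma = 0$ and $\partial^2_{\eta\eta}u|_\Gamma = \Delta_g u|_\Gamma = -\mu_2(\Omega)c_\Gamma$ (using that the tangential Laplacian of $u|_{\partial\Omega}$ vanishes on $\Gamma$ and that $d := \mathrm{dist}(\cdot,\partial\Omega)$), gives $F = 2\mu_2(\Omega)^2 c^2 d^2 + O(d^3)$, nonnegative of exact order $d^2$ near $\partial\Omega$. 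The componentwise volume balancing combined with the same expansion forces $|m_V| \le C\int_{\Omega\setminus\Omega_V}d^2\,dv_g$, so $\mu_2(\Omega)\,m_V^2\,|\Omega_V|$ is of order $\bigl(\int_{\Omega\setminus\Omega_V} d^2\,dv_g\bigr)^2$, whereas $\int_{\Omega\setminus\Omega_V} F\,dv_g$ is of order $\int_{\Omega\setminus\Omega_V} d^2\,dv_g$. Their ratio is therefore bounded by a constant multiple of $\int_{\Omega\setminus\Omega_V} d^2\,dv_g \le \eps^2|\Omega|$, so choosing $\eps$ small in terms of $\Omega$ and $\mu_2(\Omega)$ alone closes the argument independently of the specific $V$; the strictness uses that $\Omega_V\neq\Omega$ forces $|\Omega\setminus\Omega_V|>0$, hence $\int d^2>0$.
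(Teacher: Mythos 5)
Your proof is correct, and the core construction (the $C^1$-extension of $u$ by locally constant values on each side of $\partial\Omega$, followed by inserting $\tilde u - m_V$ into the Rayleigh quotient for $\Omega_V$) is the same as the paper's. The boundary-identity argument is also essentially the paper's -- both use the maximum of $u^2$ on $\partial\Omega$, unique continuation, and the overdetermined condition -- though your one-liner $|\nabla u|^2 = \lambda + \mu_2(\Omega)u^2 \le 0$ on all of $\partial\Omega$ is a slightly tighter way of getting the global constancy than the separate max/min point argument in the paper.

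Where you diverge is in the final comparison. After reducing to $\int_{\Omega\setminus\Omega_V}F\,dv_g > \mu_2(\Omega)\,m_V^2\,|\Omega_V|$ with $F = |\nabla u|^2 + \mu_2(\Omega)(\lambda_0 - u^2)$, you prove it asymptotically: a second-order Taylor expansion along the normal shows $F \sim 2\mu_2(\Omega)^2\lambda_0 d^2$ and, using the componentwise volume balance to cancel the $O(1)$ part of $\int_{\Omega_V}\tilde u$, that $|m_V| = O\bigl(\int_{\Omega\setminus\Omega_V}d^2\bigr)$; the right-hand side is therefore quadratic in $\int_{\Omega\setminus\Omega_V}d^2$ while the left-hand side is linear, so choosing $\eps$ small in a quantitative sense (depending on the Taylor constants of $u$) closes the argument. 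The paper instead proceeds algebraically: it splits $\partial\Omega$ into the level sets $\Gamma_\pm$ of $u$, applies Cauchy--Schwarz to $\int_{\Omega_V}w$, and uses the exact pointwise identity $(\lambda_0 - u^2) - (\sqrt{\lambda_0} - |u|)^2 = 2|u|(\sqrt{\lambda_0} - |u|) > 0$ to get $\int_{\Omega_V}(w - m(w))^2 > \int_\Omega u^2$ without any further smallness condition on $\eps$ beyond the geometric requirements ($U_\eps(\Gamma_+)\cap U_\eps(\Gamma_-)=\varnothing$, $0 < |u| < \sqrt{\lambda_0}$ in $U_\eps(\partial\Omega)\cap\Omega$). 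Your route has the advantage of exhibiting the exact orders in $d$, which is conceptually illuminating; the paper's route avoids expansions and constants altogether and yields a larger admissible $\eps$. Both are valid.
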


\begin{proof}
Let $u$ be a nontrivial solution of (\ref{eq:overdetermined-1}).   To see this,  choose $x_1,x_2 \in \partial \Omega$ such that $u^2(x_1)= \max
\limits_{\partial \Omega} u^2$ and $u^2(x_2)= \min \limits_{\partial
  \Omega}u^2$, so that $\n u^2(x_1)= 2u(x_1)\n u(x_1)=0$. By unique continuation,
we know that $u^2(x_1) \not = 0$, so that $\n u(x_1) =0$, yielding  $u^2(x_1)=-  \frac{\lambda}{\mu_2(\O)}>0$. This latter property and the fact that $\n u^2(x_2)=2u(x_2)\n u(x_2)=0$ imply that $\n u(x_2) = 0$ and thus  $u^2(x_2)=-  \frac{\lambda}{\mu_2(\O)}$. This proves (\ref{eq:usquare-const}).\\
In the following, we put $\lambda_0:= - \frac{\lambda}{\mu_2(\Omega)}$ for the constant value of $u^2$ on $\partial \Omega$. Moreover, 
we let $\Delta_{\text{\tiny $\partial \Omega$}}$ denote the Laplace-Beltrami operator on the $N-1$-dimensional submanifold $\partial \Omega$ and $H_{\text{\tiny $\partial \Omega$}}$ the mean curvature of $\partial \Omega$. Since $\Delta_{\text{\tiny $\partial \Omega$}} u \equiv 0$ and $\partial_\eta  u = 0$ on $\partial \Omega$, we find that 
$$
\partial_{\eta  \eta } [u^2]  = 2 u \partial_{\eta \eta } u =
2 u\Bigl(\Delta u - \Delta_{\text{\tiny $\partial \Omega$}} u - H_{\text{\tiny $\partial \Omega$}} \partial_\eta  u\Bigr)
=2 u \Delta u = -2 \mu_2(\Omega) u^2 =2 \lambda < 0 
$$
on $\partial \Omega$. Consequently, there exists $\eps>0$ such that 
\begin{equation}
  \label{eq:u-square-lambda-0-ineq}
u^2 < \lambda_0 \qquad \text{in $U_\eps(\partial \Omega) 
\cap \Omega$.}
\end{equation}
Next, we decompose $\partial \Omega$ into the compact subsets 
$\Gamma_\pm := \{x \in \partial \Omega\::\: u(x)= \pm \sqrt{\lambda_0}\}.$
By making $\eps>0$ smaller if necessary, we can then achieve that 
$$
U_{\eps}(\Gamma_+) \cap U_{\eps}(\Gamma_-) = \varnothing
$$
and 
\begin{equation}
  \label{eq:u-sign-ineq}
0< u < \sqrt{\lambda_0} \quad \text{in $U_{\eps}(\Gamma_+)\cap \Omega$}, \qquad 
-\sqrt{\lambda_0}  < u<  0 \quad \text{in $U_{\eps}(\Gamma_-)\cap \Omega$}.
\end{equation}
In the following, we fix an admissible deformation field $V \in \cV(\M)$ for $\Omega$ which is locally volume preserving in $U_\eps(\partial \Omega)$ and such that $\Omega_V \not = \Omega$. To complete the proof, we need to show that 
\begin{equation}
  \label{eq:sufficient-mu-2-local-max}
\mu_2(\Omega_V) < \mu_2(\Omega).  
\end{equation}
For this we define the function $w \in C^1(U_{\eps}(\Omega))$ by 
$$
w(x)= 
\left \{
  \begin{aligned}
   &u(x),&&\qquad x \in \Omega,\\
   &+\sqrt{\lambda_0},&&\qquad x \in U_{\eps}(\Gamma_+) \setminus \Omega,\\
   &-\sqrt{\lambda_0},&&\qquad x \in U_{\eps}(\Gamma_-) \setminus \Omega.\\
  \end{aligned}
\right.
$$
Since $\Omega_\V \subset U_\eps(\ov \Omega)$, we may use $w$ in the variational characterization of $\mu_2(\Omega_\V)$ to deduce that 
\begin{equation}
  \label{eq:var-char-w-est}
\mu_2(\O_V) \le \frac{\int_{\O_\V}|\n w|^2\,dx}{\int_{\O_\V} (w - m(w))^2\,dx}\qquad \text{with}\quad m(w) := \frac{1}{|\Omega_\V|}\int_{\Omega_\V} w\,dx.
\end{equation}
Since $|\Omega_\V|=|\Omega|$, we have 
\begin{equation}
   \label{eq:-star-additional}
\int_{\O_\V} (w - m(w))^2\,dx = \int_{\O_\V} w^2\,dx - \frac{1}{|\Omega_\V|}
\Bigl(\int_{\O_\V} w\,dx\Bigr)^2 = \int_{\O_\V} w^2\,dx - \frac{1}{|\Omega|} \Bigl(\int_{\O_\V} w\,dx\Bigr)^2. 
\end{equation}
Moreover, since $|\Omega_\V \setminus \Omega|= |\Omega \setminus \Omega_\V|$, we have that
\begin{align}
\int_{\O_\V}& w^2\,dx = \int_{\O_\V \setminus \O}w^2\,dx + \int_{\O} w^2\,dx - \int_{\O \setminus \O_\V} w^2\,dx \nonumber\\
&= \lambda_0 |\O_\V \setminus \O| + \int_{\O} u^2\,dx - 
\int_{\O \setminus \O_\V} u^2\,dx= \int_{\O} u^2\,dx + \int_{\O \setminus \O_\V} (\lambda_0- u^2)\,dx.\label{int-w-sqrt-est}
\end{align}
Furthermore, since 
$$
\int_{\Omega}u \,dx = -\frac{1}{\mu_2(\Omega)} \int_{\Omega}\Delta u \,dx = \int_{\partial \Omega} u_\nu\,d\sigma = 0,
$$
we find that
\begin{align}
\int_{\O_\V} w\,dx &= \int_{\Omega_\V \setminus \O} w\,dx +  \int_{\Omega_\V \cap \O} u = \int_{\Omega_\V \setminus \O} w\,dx - \int_{\Omega \setminus \O_\V} u \,dx\\ 
&= \sum_{i=\pm}\Bigl( \int_{(\Omega_\V \setminus \O) \cap U_\eps(\Gamma_i)} w\,dx - \int_{(\Omega \setminus \O_\V)  \cap U_\eps(\Gamma_i)} u \,dx\Bigr) \nonumber\\
&= \sqrt{\lambda_0} \Bigl(\bigl|(\Omega_\V \setminus \O) \cap U_\eps(\Gamma_+)\bigr|-\bigl|(\Omega_\V \setminus \O) \cap U_\eps(\Gamma_-)\bigr|\Bigr) - \sum_{i=\pm}\: \int_{(\Omega \setminus \O_\V)  \cap U_\eps(\Gamma_i)} u \,dx
  \nonumber\\
&= \int_{(\Omega \setminus \O_\V)  \cap U_\eps(\Gamma_+)}\!\!\!\!\!\!(\sqrt{\lambda_0}- |u|) \,dx \quad  - \int_{(\Omega \setminus \O_\V)  \cap U_\eps(\Gamma_-)}\!\!\!\!\!\!(\sqrt{\lambda_0}-|u|) \,dx
 \label{w-average-est}
\end{align}
Here we used (\ref{eq:u-sign-ineq}) and the fact that $|(\Omega_\V \setminus \O) \cap U_\eps(\Gamma_\pm)|= |(\O \setminus \O_\V ) \cap U_\eps(\Gamma_\pm)|$. Applying the Cauchy Schwarz inequality to the RHS of (\ref{w-average-est}), we deduce that 
\begin{align}
\Bigl(\int_{\O_\V} w\,dx\Bigr)^2 &\le \Bigl(|(\Omega \setminus \O_\V)  \cap U_\eps(\Gamma_+)|+|(\Omega \setminus \O_\V)  \cap U_\eps(\Gamma_-)|\Bigr) \sum_{i=\pm} \:\int_{(\Omega \setminus \O_\V)  \cap U_\eps(\Gamma_i)}\!\!\!\!\!\!\!(\sqrt{\lambda_0}- |u|)^2 \,dx \nonumber\\
&=|\Omega \setminus \O_\V| \int_{\Omega \setminus \O_\V}(\sqrt{\lambda_0}-|u|\bigr)^2 \,dx.\label{w-average-square-est}
\end{align}
Combining (\ref{eq:-star-additional}), (\ref{int-w-sqrt-est}) and (\ref{w-average-square-est}), we find that  
\begin{align}
\int_{\O_\V} (w - m(w))^2\,dx &\ge  \int_{\O} u^2\,dx + \int_{\O \setminus \O_\V} \Bigl[(\lambda_0- u^2) -(\sqrt{\lambda_0}-|u|)^2\Bigr]\,dx \nonumber\\
&= \int_{\O} u^2\,dx + 2 \int_{\O \setminus \O_\V} |u|\bigl(\sqrt{\lambda_0}-|u|\bigr)\,dx > \int_{\O} u^2\,dx,\label{w-minus-w-average-1}
\end{align}
where the last inequality follows from (\ref{eq:u-square-lambda-0-ineq}) and the fact that $|\Omega \setminus \O_\V|>0$ by assumption. Since $w$ is constant on $\Omega_\V \setminus \Omega$, we also have that
\begin{align}
\int_{\O_\V}|\n w|^2\,dx &= \int_{\O_\V \setminus \O}|\n w|^2\,dx + \int_{\O}|\n w|^2\,dx - \int_{\O \setminus \O_\V}|\n w|^2\,dx \nonumber\\   
&= \int_{\O}|\n u|^2\,dx - \int_{\O \setminus \O_\V}|\n u|^2\,dx \le \int_{\O}|\n u|^2\,dx. \label{grad-w-est}  
\end{align}
Combining (\ref{eq:var-char-w-est}), (\ref{w-minus-w-average-1}) and (\ref{grad-w-est}), we finally conclude that  
$$
\mu_2(\Omega_\V) < \frac{\int_{\O}|\n u|^2\,dx}{\int_{\O_\V} u^2\,dx} = \mu_2(\Omega). 
$$
We thus have (\ref{eq:sufficient-mu-2-local-max}), as required.
\end{proof}

\begin{Corollary}
\label{sec:vari-zang-dom-final-corol}
Under the assumptions of Proposition~\ref{proposition-implication-strong-sense} , $\Omega$ is not a constrained local minimum for $\mu_2$ unless $\Omega= \M$. Moreover, it is a strict local maximum if $\partial \Omega$ is connected. 
\end{Corollary}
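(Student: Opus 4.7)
The plan is to deduce both claims directly from Proposition~\ref{proposition-implication-strong-sense} by producing, in each case, enough admissible deformation fields that are locally volume preserving in $U_\eps(\partial \Omega)$ to force a strict decrease of $\mu_2$. Throughout, let $\eps>0$ be the constant provided by Proposition~\ref{proposition-implication-strong-sense}.

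For the first assertion, I would argue by contradiction: assume $\Omega \neq \M$ and that $\Omega$ is a constrained local minimum for $\mu_2$. Since $\Omega$ is a bounded $C^2$-domain strictly contained in $\M$, the boundary $\partial \Omega$ is a nonempty closed $(N-1)$-submanifold, so each of its (finitely many) connected components carries a nontrivial $C^1$-function with zero mean; summing yields a nontrivial $h \in C^1(\partial \Omega)$ with $\int_\Gamma h\,d\sigma=0$ for every connected component $\Gamma \subset \partial \Omega$. Lemma~\ref{sec:technical-lemma-1}(ii) then produces a $C^1$-curve $t \mapsto V_t$ of admissible deformation fields, locally volume preserving in $U_\eps(\partial \Omega)$, with $V_0 = 0$ and $\Omega_{V_t} \neq \Omega$ for $t \neq 0$. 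Since $\|V_t\|_{C^1} \to 0$ as $t \to 0$, Proposition~\ref{proposition-implication-strong-sense} gives $\mu_2(\Omega_{V_t}) < \mu_2(\Omega)$ for all sufficiently small nonzero $t$, contradicting the assumed local minimum property.

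For the second assertion, the key step is to show that when $\partial \Omega$ is connected, every admissible deformation field $V$ for $\Omega$ with $\|V\|_{C^1}$ sufficiently small is automatically locally volume preserving in $U_\eps(\partial \Omega)$ in the sense of Definition~\ref{def-locally-volume-preserving}. First I would shrink $\eps$ so that $U_\eps(\partial \Omega)$ remains connected, which is a standard property of small tubular neighborhoods of a connected closed submanifold. Then, using the bound $d_g(x, \tau_V(x)) \le \|V\|_{C^0} \le \|V\|_{C^1}$ coming from the exponential map together with the diffeomorphism property of $\tau_V$ on a neighborhood of $\overline \Omega$, I would verify that every point of the symmetric difference $(\Omega_V \setminus \Omega) \cup (\Omega \setminus \Omega_V)$ can be joined by a short geodesic across $\partial \Omega$ to its image or preimage under $\tau_V$, and hence lies in $U_\eps(\partial \Omega)$. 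The volume constraint $|\Omega_V| = |\Omega|$ implies $|\Omega_V \setminus \Omega| = |\Omega \setminus \Omega_V|$, and since $U_\eps(\partial \Omega)$ has a single connected component this gives local volume preservation; Proposition~\ref{proposition-implication-strong-sense} then yields $\mu_2(\Omega_V) < \mu_2(\Omega)$ whenever $\Omega_V \neq \Omega$, which is the claimed strict local maximum property.

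The main obstacle I expect is the containment step in the second paragraph: carefully checking that small-$C^1$ admissible deformations push the symmetric difference of $\Omega$ and $\Omega_V$ into an arbitrarily thin tubular neighborhood of $\partial \Omega$, and that this tubular neighborhood can be taken connected. Although geometrically transparent, making this rigorous requires combining the diffeomorphism property of $\tau_V$ near $\overline{\Omega}$ with the geodesic displacement estimate and a short topological argument identifying on which side of $\partial \Omega$ each point lies. The first claim, by contrast, is essentially automatic once Lemma~\ref{sec:technical-lemma-1}(ii) is available and $\Omega \neq \M$ is assumed.
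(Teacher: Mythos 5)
Your proof is correct and follows essentially the same strategy as the paper's: Lemma~\ref{sec:technical-lemma-1}(ii) produces the locally volume preserving deformations for the first claim, and for the second claim one observes that when $\partial\Omega$ is connected every admissible deformation field with sufficiently small $C^1$-norm is automatically locally volume preserving in $U_\eps(\partial\Omega)$, so Proposition~\ref{proposition-implication-strong-sense} applies. The paper is terser (it simply asserts the existence of the threshold $\eps_1$), whereas you sketch the displacement estimate $d_g(x,\tau_V(x))\le\|V\|_{C^0}$ and the connectedness of the tubular neighborhood that justify it; this filling-in is accurate, and in fact $U_\eps(\partial\Omega)$ is connected for the given $\eps$ whenever $\partial\Omega$ is, so no shrinking of $\eps$ is needed.
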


\begin{proof}
Let $\eps >0$ be given by Proposition~\ref{proposition-implication-strong-sense}. If $\Omega \not = \M$, then by Lemma~\ref{sec:technical-lemma-1}(ii) there exists an admissible deformation field $V \in \cV(\M)$ for $\Omega$ which satisfies $\|V\|_{C^1}<\eps$, is locally volume preserving in $U_\eps(\partial \Omega)$ and such that $\Omega_V \not = \Omega$. Moreover, Proposition~\ref{proposition-implication-strong-sense} yields that $\mu_2(\Omega_\V) < \mu_2(\Omega)$ in this case. Hence $\Omega$ is not a constrained local minimum for $\mu_2$.\\
Moreover, if $\partial \Omega$ is connected, then there exists $\eps_1= \eps_1(\eps)>0$ such that every admissible deformation field $V \in \cV(\M)$ for $\Omega$ with $\|V\|_{C^1}< \eps_1$ is also locally volume preserving in 
$U_\eps(\partial \Omega)$, and thus Proposition~\ref{proposition-implication-strong-sense} yields that $\mu_2(\Omega_\V) < \mu_2(\Omega)$ if $\Omega_\V \not = \Omega$. This ends the proof.    
\end{proof}

\begin{proof}[Proof of Theorem~\ref{sec:main-theorem-1} (completed)]
Part (i) is already contained in Corollary~\ref{overdetermined-corollary}(ii), and Parts (ii) and (iii) follows directly from Proposition~\ref{proposition-implication-strong-sense}. 
\end{proof}

\begin{proof}[Proof of Theorem~\ref{sec:special-theorem-1} (completed)]
The result is already contained in Proposition~\ref{proposition-implication-strong-sense}.  
\end{proof}

\section{The case of cylindrical manifolds}\label{s:cylinder-case}

In this section, we restrict our attention to the case $\M$ is a cylindrical manifold of the form $\M:=\R^k \times \cN$, where 
$(\cN,g_\cN)$ is a closed connected manifold and $\M$ is endowed with the product metric $g= g_{eucl} \otimes g_{\cN}$. 

In the following, we let $B^k \subset \R^k$ denote the unit ball. 
As noted already in the introduction, $\mu_2(B^k)$ is of multiplicity $N$ with corresponding eigenfunctions $x \mapsto \vp(|x|)\frac{x_i}{|x|}$, $i=1,\dots, k$, 
where $\vp$ is the unique solution of the boundary value problem 
$$
 \vp''+\frac{k-1}{t}\vp'+\left(\mu_2(B)-\frac{k-1}{t^2}
\right)\vp=0,\quad t\in (0,1),\quad \vp(0)=\vp'(1)=0.
$$
The function $ \vp$  and the eigenvalue $\mu_2(B)$ can be characterized 
via $J_{k/2}$, the Bessel function of the first kind of order $k/2$, see e.g. \cite{lebedew}.
Indeed, $\sqrt{\mu_2(B)}$ is the first positive zero of the derivative
of  $t \mapsto t^{(2-k)/2} J_{k/2}( t)$, and $\vp$ is a scalar multiple of the function 
\begin{equation}
  \label{eq:42}
t \mapsto g(t)=t^{(2-k)/2} J_{k/2}(\sqrt{\mu_2(B)} t).
\end{equation}
As a consequence of these facts, the cylindrical domain
\be\label{eq:defG}
\Omega_r:=\{(t,x) \in \M \,:\, |t| \le r, ,\, x \in \cN\} \subset \M, \qquad r>0
\ee
admits the Neumann eigenvalue $\mu^r:= \frac{\mu_2(B^k)}{r^2}$ with eigenfunctions 
\begin{equation}
  \label{eq:eigenfunctions-cylinder}
u_r^i: \overline \Omega_r \to \R,\quad u_r^i(t,x):= \vp(\frac{|t|}{r})\frac{t_i}{|t|},\qquad i=1,\dots,k.
\end{equation}
In particular, for $k=1$ we have $\mu_2(B^1)= \frac{\pi^2}{4}$, and there is only one function of the type (\ref{eq:eigenfunctions-cylinder}), up to a constant factor, given by 
\begin{equation}
  \label{eq:k=1-r-eigenfunction}
u_r : \M \to \R, \qquad u_r(t,x)= \sin(\frac{\pi}{2r} t).  
\end{equation}
The following observation is the first step in the proof of Theorem~\ref{sec:weinberger-modified}, and it is closely related to Weinberger's euclidean isoperimetric inequality for $\mu_2$ in \cite{Wein}. 

\begin{Proposition}
\label{sec:introduction-1}
Let $r>0$. If $\Omega \in \cO^1(\M)$ satisfies
$|\Omega|=|\Omega_r|$, then $\mu_2(\Omega) \le \mu^r$ with equality if and only if
$\Omega=\Omega_r$ up to translation in $t$-direction.
\end{Proposition}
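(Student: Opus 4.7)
The plan is to adapt Weinberger's classical test-function argument from \cite{Wein} to the cylindrical setting, exploiting the fact that the eigenfunctions $u_r^i$ on $\Omega_r$ depend only on the $\R^k$-coordinate $t$. First extend the radial profile $\varphi$ beyond $\rho=r$ by its boundary value, setting
\[
\Psi(\rho) := \varphi(\rho/r) \text{ for } \rho\le r, \qquad \Psi(\rho) := \varphi(1) \text{ for } \rho>r.
\]
Since $\varphi'(1)=0$, the function $\Psi$ is $C^1$ on $[0,\infty)$. For a center $t_0\in\R^k$ to be chosen, introduce the test functions $\psi_i:\M\to\R$, $\psi_i(t,x):= \Psi(|t-t_0|) (t-t_0)_i / |t-t_0|$, $i=1,\ldots,k$. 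The center $t_0$ is selected so that $\int_\Omega \psi_i\,dv_g=0$ for each $i$: writing $V(t):= |\{x\in\cN:(t,x)\in\Omega\}|_{g_\cN}$, this reduces to a vector equation in $\R^k$ that is precisely the vanishing $t_0$-gradient of the coercive functional $t_0 \mapsto \int_{\R^k} R(|t-t_0|) V(t)\,dt$, where $R(s):=\int_0^s \Psi(\sigma)\,d\sigma$; existence of a minimizer then yields the desired $t_0$.

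With this $t_0$, the variational characterization of $\mu_2(\Omega)$ applied to each $\psi_i$ and summed gives $\mu_2(\Omega)\int_\Omega \sum_i \psi_i^2\,dv_g \le \int_\Omega \sum_i |\nabla \psi_i|^2\,dv_g$. A direct computation using only the chain rule in the $\R^k$-factor (since each $\psi_i$ is independent of $x$) yields the classical identities
\[
\sum_{i=1}^k \psi_i^2 = \Psi(\rho)^2, \qquad \sum_{i=1}^k |\nabla \psi_i|^2 = \Psi'(\rho)^2 + (k-1)\frac{\Psi(\rho)^2}{\rho^2} =: \Phi(\rho),
\]
where $\rho=|t-t_0|$. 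Observe that $\Phi$ is non-increasing on $[0,\infty)$ (on $[0,r]$ this is the classical Weinberger property, a consequence of the ODE satisfied by $\varphi$; on $[r,\infty)$ one has $\Phi(\rho)=(k-1)\varphi(1)^2/\rho^2$), while $\Psi^2$ is non-decreasing and bounded above by $\varphi(1)^2$.

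Next, compare against the translated cylinder $\Omega^*_{t_0}:=\{(t,x)\in\M:|t-t_0|\le r\}$, which has the same $\M$-volume as $\Omega$. Using that $V(t)\le|\cN|$ and $\int_{\R^k}V\,dt=|\Omega|=|\cN||B^k_r|$, a direct slice-wise rearrangement argument gives $\int_\Omega \Psi^2\,dv_g \ge \int_{\Omega^*_{t_0}}\Psi^2\,dv_g$ (since $\Psi^2\le\varphi(1)^2$, with strict inequality on $\{\rho<r\}$) and, symmetrically, $\int_\Omega \Phi\,dv_g \le \int_{\Omega^*_{t_0}}\Phi\,dv_g$. On $\Omega^*_{t_0}$ the $\psi_i$ coincide (up to the translation by $t_0$) with the eigenfunctions $u_r^i$ on $\Omega_r$, so $\int_{\Omega^*_{t_0}} \Phi\,dv_g = \mu^r \int_{\Omega^*_{t_0}}\Psi^2\,dv_g$; chaining the three estimates yields $\mu_2(\Omega)\le\mu^r$.

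For the equality case, equality throughout forces equality in the $\Psi^2$-comparison, and the strict inequality $\Psi^2<\varphi(1)^2$ on $\{0<\rho<r\}$ together with the volume matching $\int V\,dt=|\cN||B^k_r(t_0)|$ compels $V(t)=|\cN|\mathbf{1}_{B^k_r(t_0)}(t)$ a.e.\ on $\R^k$; combined with the $C^1$-regularity of $\partial\Omega$, this forces $\Omega=\Omega^*_{t_0}$, a translate of $\Omega_r$ in the $\R^k$-direction. The main obstacle is the rearrangement step: the classical Weinberger argument uses full spherical rearrangement on $\R^N$, whereas here the reference domain is a cylinder and the comparison must be carried out through the one-dimensional slice density $V(t)$ over $\R^k$, with the $\cN$-factor entering only through its total volume. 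The monotonicity of $\Phi$ is the only ingredient that genuinely uses the specific Bessel-type profile, through the ODE satisfied by $\varphi$.
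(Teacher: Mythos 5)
Your proof is correct and follows essentially the same Weinberger-type strategy as the paper: extend the radial profile $\varphi$ constantly past $\rho=r$, build the $k$ test functions $\Psi(|t-t_0|)(t-t_0)_i/|t-t_0|$, choose the center $t_0$ so that they are $L^2$-orthogonal to constants, sum the Rayleigh quotients, and compare against the cylinder $\Omega^*_{t_0}$ using that $\Psi^2$ is non-decreasing and $\sum_i|\nabla\psi_i|^2$ is non-increasing in $\rho$; the equality case then forces $|\Omega\bigtriangleup\Omega^*_{t_0}|=0$. The only real deviation is the mechanism for choosing $t_0$: the paper uses a Brouwer fixed-point/degree argument (the vector field $y\mapsto\int_\Omega G(|t-y|)\frac{t-y}{|t-y|}\,d(t,x)$ points inward at infinity, hence has a zero), while you instead minimize the coercive $C^1$ potential $t_0\mapsto\int_{\R^k}R(|t-t_0|)V(t)\,dt$ whose gradient is exactly the needed vector of orthogonality integrals. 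Both arguments are standard and equivalent in force; your variant has the slight advantage of requiring no topology, only elementary coercivity. One small remark: the paper's formula for $H$ drops the $(k-1)$ factor in $G^2/\tau^2$, whereas you have the correct expression $\Phi(\rho)=\Psi'(\rho)^2+(k-1)\Psi(\rho)^2/\rho^2$; this is a harmless typo in the paper and does not affect the argument.
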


\begin{proof}
The proof is modeled on Weinberger's argument in \cite{Wein}. 
Consider the function $G:[0,\infty) \to \R$ defined by $G(\tau)=\vp(\frac{\tau}{r})$ for $\tau \le r$ and $G(\tau)=\vp(1)$ for $\tau \ge r$. Moreover, consider the continuous vector field 
$$
V: \R^k \to \R^k,  \qquad V(y) = \int_{\Omega} G\bigl(|t-y|\bigr) \frac{t-y}{|t-y|}d(t,x).
$$
Since $\Omega$ is bounded, we have 
$\frac{V(y)}{|V(y)|} = -\frac{y}{|y|}+ o(1)$ as $|y| \to \infty$. Hence Brower's fixed point implies that $V$ has a zero, and without loss we may, by translation $\Omega$ in the $t$-variables if necessary, assume that $V(0)=0$. Consequently, the restrictions of each of the functions 
$$
v_i: \M \to \R, \qquad v_i(t,x)= G(|t|)\frac{t_i}{|t|}
$$
to $\Omega$ belongs to $H^1(\Omega)$ and satisfies $\int_\Omega V_i\,dx = 0$. Therefore (\ref{eq:characterization-mu2}) implies that   
\begin{align}
 \nonumber 
\mu_2(\Omega) \int_{\Omega}G^2(|t|)\,d(t,x) &= \sum_{i=1}^k \mu_2(\Omega) 
\int_{\Omega} v_i^2\,d(t,x)\\
 &\le \sum_{i=1}^k \int_{\Omega} |\nabla v_i|^2\,d(t,x) = \int_{\Omega}
H(|t|)\,d(t,x) \label{eq:test-1}
\end{align}
with $H:(0,\infty) \to \R$ given by $H(\tau)=[G'(\tau)]^2 + \frac{G^2(\tau)}{\tau^2}$. As noted in 
\cite{Wein}, $G$ and $H$ are nonnegative functions such that $G$ is increasing and $H$ is strictly decreasing. Consequently, since $|\Omega|= |\Omega_r|$, we have that  
\begin{align*}
\int_{\Omega}&G^2(|t|)\,d(t,x) = \int_{\Omega_r}G^2(|t|)\,d(t,x) + 
\int_{\Omega \setminus \Omega_r}G^2(|t|)\,d(t,x) -\int_{\Omega_r \setminus \Omega}G^2(|t|)\,d(t,x)\\ 
 &\ge \int_{\Omega_r}G^2(|t|)\,d(t,x) + G(1) \Bigl( |\Omega \setminus \Omega_r| - 
|\Omega_r \setminus \Omega|\Bigr) \ge  \int_{\Omega_r}G^2(|t|)\,d(t,x),
\end{align*}
and, similarly, 
\begin{equation}
  \label{eq:ineq-H}
\int_{\Omega} H(|t|)\,d(t,x) \le \int_{\Omega_r}
H(|t|)\,d(t,x).
\end{equation}
Moreover, equality holds in (\ref{eq:ineq-H}) if and only if $|\Omega_r \setminus \Omega|= |\Omega \setminus \Omega_r|= 0$, i.e. if $\Omega= \Omega_r$. Consequently, we have that 
\begin{equation}
  \label{eq: finalineq-Omega-Omegar}
\mu_2(\Omega) \le \frac{\int_{\Omega_r}G^2(|t|)\,d(t,x)}{\int_{\Omega_r}
H(|t|)\,d(t,x)}
\end{equation}
with equality if and only if $\Omega= \Omega_r$. Since equality holds in (\ref{eq:test-1}) when $\Omega$ is replaced by $\Omega_r$ and $\mu_2(\Omega)$ by $\mu^r$, the right hand side of (\ref{eq: finalineq-Omega-Omegar}) equals $\mu^r$. Thus the proof is finished.
\end{proof}

We may now finish the

\begin{proof}[Proof of Theorem~\ref{sec:weinberger-modified}]
Let $r>0$. By separation of variables, only two cases may occur:\\
{\bf \em Case 1:} $\mu_2(\Omega_r)= \mu_2(\cN)$, and at least one associated eigenfunction on $\Omega_r$ is of the form $(t,x) \mapsto w(x)$, where $w \in C^2(\cN)$ is an eigenfunction corresponding to $\mu_2(\cN)$.\\
{\bf \em Case 2:} $\mu_2(\Omega_r)= \mu^r$, and the functions given in (\ref{eq:k=1-r-eigenfunction}) are contained in the associated eigenspace.
\\
Clearly, Case 2 occurs if and only if $\mu^r = \frac{\mu_2(B^k)}{r^2} \le \mu_2(\cN)$, which holds if and only if $|\Omega_r| = \omega_k |\cN| r^k$ is larger than or equal to the critical volume given in Theorem~\ref{sec:weinberger-modified}. Thus, if $v  \ge v_c$ is fixed and $r= \Bigl(\frac{v}{\omega_k|\cN|}\Bigr)^{\frac{1}{k}}$, then Proposition~\ref{sec:introduction-1} yields that 
$$
\mu_2(\Omega) \le \mu^r = \mu_2(\Omega_r) \qquad \text{for every domain $\Omega \in \cO^1(\M)$ with $|\Omega|= v$.}
$$
Moreover, equality holds if and only if $\Omega=\Omega_r$ up to translation in the $t$-variable.
\end{proof}

We now turn to the overdetermined boundary value problem
\begin{equation}
\label{eq:overdetermined}
\left \{
\begin{aligned}
-\D_g u&=\mu_2(\Omega) u &&\qquad \text{in $\O$},\\
\partial_\eta u&=0, \quad
|\nabla u|^2 -\mu_2(\Omega) u^2 = \lambda    
&&\qquad \text{on $\de \O$}.
  \end{aligned}
\right.
\end{equation}
The remainder of this section will be devoted to the proof of Theorem~\ref{sec:introduction-3-intro}, which we restate here for the reader's convenience.

\begin{Theorem}
\label{sec:introduction-3}
Let $\Omega \in \cO^2(\M)$ be a domain such that the overdetermined problem 
\eqref{eq:overdetermined} admits a solution for some constant $\lambda \in \R$. Then $k=1$ and $\Omega=\Omega_r$ for some $r \ge \Bigl(\frac{\mu(B^k)}{\mu(\cN)}\Bigr)^{\frac{1}{2}}$ up to translation in the $t$-variable.
\end{Theorem}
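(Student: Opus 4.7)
The plan is to exploit the cylindrical structure of $\M$ via the partial derivatives $\partial_{t_1} u, \dots, \partial_{t_k} u$, which commute with $\Delta_g$. Proposition~\ref{proposition-implication-strong-sense} yields $u^2 \equiv \lambda_0 := -\lambda/\mu_2(\Omega) > 0$ on $\partial \Omega$, and substituting into the overdetermined condition in \eqref{eq:overdetermined} gives $|\nabla u|^2 \equiv 0$ on $\partial \Omega$. Hence every $\partial_{t_i} u$ vanishes on $\partial \Omega$ and, if not identically zero, solves the Dirichlet eigenvalue problem on $\Omega$ at the level $\mu_2(\Omega)$. Combining this with the classical inequality $\mu_2(\Omega) \le \mu_1^D(\Omega)$, where $\mu_1^D(\Omega)$ denotes the first Dirichlet eigenvalue of $\Omega$, together with Courant's nodal-domain bound, any nontrivial $\partial_{t_i} u$ must be a first Dirichlet eigenfunction of $\Omega$, of strict constant sign; simplicity of $\mu_1^D(\Omega)$ on the connected domain $\Omega$ forces all nontrivial $\partial_{t_i} u$ to be mutually proportional.

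To rule out $k \ge 2$, I use the following dichotomy. If every $\partial_{t_i} u$ vanishes identically, then $u$ depends only on $x \in \cN$; combined with the boundary conditions this forces $u$ to take only the two values $\pm \sqrt{\lambda_0}$ on the connected set $\Omega$, contradicting the eigenvalue equation by continuity. Otherwise the proportionality gives a nonzero $a \in \R^k$ with $\sum_i a_i \partial_{t_i} u \equiv 0$ in $\Omega$; setting $w := \sum_i a_i e_i \in \R^k$, the function $u$ is constant along every maximal segment in $\Omega$ parallel to $w$, and, since $\Omega$ is bounded, both endpoints of such a segment lie on $\partial \Omega$ where $u \in \{\pm \sqrt{\lambda_0}\}$, yielding the same two-value contradiction. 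Hence $k = 1$, and the same dichotomy also gives $\partial_t u \not\equiv 0$, so up to sign I may assume $\partial_t u > 0$ throughout $\Omega$.

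It remains to deduce $\Omega = \Omega_r$ up to $t$-translation. The plan is a moving-plane / sliding argument in the $t$-direction, in the spirit of Serrin and of Berestycki--Nirenberg \cite{berestycki-nirenberg:91}: for each parameter $\lambda$ I compare $u$ with its reflection $u^\lambda(t,x) := u(2\lambda - t, x)$ across $\{t = \lambda\}$, using strict positivity of $\partial_t u$ to start the plane from large values of $\lambda$ and sliding $\lambda$ leftward until the reflected upper cap first either touches $\partial \Omega$ interiorly or becomes tangent to $\{t = \lambda\}$. At the critical parameter, the strong maximum principle and Hopf's lemma applied to $u^\lambda - u$ force reflection symmetry of $\Omega$ (and antisymmetry of $u$) about some hyperplane $\{t = t_0\}$. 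I expect this sliding step to be the main technical obstacle: $\partial \Omega$ may consist of several disjoint components $\Gamma_\pm$ on which $u$ takes the two distinct boundary values, the $t$-fibers of $\Omega$ need not be connected, and the degenerate condition $\nabla u \equiv 0$ on $\partial \Omega$ complicates the standard Hopf-based comparison.

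After translating so that $t_0 = 0$, the antisymmetry $u(-t, x) = -u(t, x)$ combined with $\partial_t u > 0$ forces $\{u = 0\} \cap \Omega = \{t = 0\} \cap \Omega$, whose projection on $\cN$ is clopen and nonempty, hence all of $\cN$ by connectedness, so $\{0\} \times \cN \subset \Omega$. Iterating the sliding and maximum-principle approach together with the strict monotonicity $\partial_t u > 0$ then promotes this to $\Omega = (-r, r) \times \cN$ for some $r > 0$. A final separation of variables on this product domain gives $\mu_2(\Omega) = \min(\pi^2/(4r^2), \mu_2(\cN))$; a Neumann eigenfunction of the form $w(x)$ coming from the $\cN$-factor would yield $\nabla_\cN u \not\equiv 0$ on $\partial \Omega = \{\pm r\} \times \cN$, violating $\nabla u \equiv 0$ on $\partial \Omega$. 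Therefore the minimum is realized by $\pi^2/(4r^2) \le \mu_2(\cN)$, i.e., $r \ge \sqrt{\mu_2(B^1)/\mu_2(\cN)}$, with $u(t,x) = \sin(\pi t/(2r))$ up to a constant, hence $\Omega = \Omega_r$ as claimed.
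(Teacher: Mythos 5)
The outline shares the paper's opening moves: deduce $u^2 \equiv \lambda_0 > 0$ and $\nabla u \equiv 0$ on $\partial\Omega$, and then study the derivatives $\partial_{t_i}u$ as Dirichlet eigenfunctions at level $\mu_2(\Omega)$. But it diverges at two critical points, and at both there is a genuine gap.

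First, the inequality $\mu_2(\Omega)\le\mu_1^D(\Omega)$ is not ``classical'' in this generality: it is Friedlander's inequality, whose known proofs (Friedlander, Filonov) use plane waves and therefore depend essentially on the Euclidean structure, and it is not available (indeed can fail) on curved Riemannian manifolds. Without it you cannot conclude that a nontrivial $\partial_{t_i}u$ is a first Dirichlet eigenfunction, nor can you appeal to the simplicity of $\mu_1^D(\Omega)$ to make the $\partial_{t_i}u$ proportional, which is what your $k\ge2$ argument rests on. The paper instead uses only the elementary bound $\mu_2(\Omega)\le\lambda_2(\Omega)$, coming from min-max because every two-dimensional subspace of $H^1_0(\Omega)$ meets the mean-zero hyperplane, and excludes a sign change of $u_\sigma$ by noting that equality $\mu_2=\lambda_2$ would produce, from a suitable linear combination of $u_\sigma^+$ and $u_\sigma^-$, a Neumann eigenfunction vanishing together with its normal derivative on $\partial\Omega$, contradicting unique continuation. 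It also disposes of $k\ge2$ without simplicity: $u_\sigma$ has a strict sign for every unit vector $\sigma$, $u_{-\sigma}=-u_\sigma$, and $S^{k-1}$ is connected for $k\ge2$, a contradiction.

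Second, the moving-plane/reflection scheme, even granting the difficulties you flag yourself, does not deliver the rigidity $\Omega=\Omega_r$: reflection symmetry of $\Omega$ in $t$ plus $\{0\}\times\cN\subset\Omega$ does not force all $t$-sections $\{t:(t,x)\in\Omega\}$ to be intervals of the same length, and the phrase ``iterating the sliding and maximum-principle approach'' leaves precisely this step unproved. The paper's argument is of a different kind. After showing, via Courant's nodal theorem and $u_t>0$, that the zero set of $u$ in $\Omega$ is a continuous graph $\{(\tau(x),x):x\in\cN\}$ and that $\partial\Omega$ splits into two pieces $\Gamma_\pm$ on which $u\equiv\pm\sqrt{\lambda_0}$, it fixes $r$ with $\mu_2(\Omega)=\pi^2/(4r^2)$, extends $u$ to all of $\M$ by the constants $\pm\sqrt{\lambda_0}$, and slides the explicit model function $v_s$ built from $\sqrt{\lambda_0}\sin(\pi(t+s)/(2r))$ (also extended by constants) down from above to first contact $s_0$. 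The strong maximum principle applied to $v_{s_0}-u$ on $\Omega\cap\tilde\Omega$ then forces $u\equiv v_{s_0}$ and $\Omega=\tilde\Omega$, while boundary first contact is impossible by the Hopf lemma precisely because $\nabla u=\nabla v_{s_0}=0$ there. It is this comparison against a known explicit solution, rather than against a reflection of $u$, that produces the conclusion $\Omega=\Omega_r$, and your sketch does not replace it.
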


\begin{proof}
Let $u$ be a nontrivial solution of (\ref{eq:overdetermined}). By Theorem~\ref{sec:introduction-3} we have that $\lambda<0$ 
and $u^2 \equiv \lambda_0:=- \frac{\lambda}{\mu_2(\O)}$ on $\partial \Omega$. So $u$ is locally constant and nonzero on $\partial \Omega$.
Next we consider some unit vector $\sigma \in \R^k$ and the directional derivative 
$$
u_{{\sigma}} = \partial_{\text{\tiny $({\sigma},0)$}}u : \overline{\Omega} \to \R,\qquad 
u_{{\sigma}}(t,x)= \lim_{\eps \to 0} \frac{u(t+ \eps {\sigma},x)-u(t,x)}{\eps}.
$$
We claim that 
\begin{equation}
  \label{eq:claim-u-sigma}
\text{for every unit vector $\sigma \in \R^k$ we have $u_\sigma>0$ in $\Omega$ or $u_\sigma<0$ in $\Omega$.}  
\end{equation}
Indeed, differentiating the first equation in (\ref{eq:overdetermined}) and
recalling that $\nabla u \equiv 0$ on $\partial \Omega$, we
see that $u_{{\sigma}}$ solves 
\be
\label{eq:dirichlet-partial-derivative}
\left \{
\begin{aligned}
-\D_g u_{{\sigma}}&=\mu_2(\O) u_{{\sigma}}&&\qquad \textrm{in } \O,\\
u_{{\sigma}}& = 0&&\qquad \textrm{on } \de \O,\\
\end{aligned}
\right.
\ee
If we now suppose by contradiction that $u_{{\sigma}}$ changes sign, then the second Dirichlet eigenvalue $\lambda_2(\Omega)$  of $-\D_g$
on $\Omega$ is less than or equal to $\mu_2(\Omega)$. On the other hand, the variational characterization (\ref{eq:characterization-mu2}) gives rise to the inequality $\mu_2(\Omega) \le \lambda_2(\Omega)$, and so equality holds. But then a nontrivial linear combination $v$ of the positive and negative part of $u_{{\sigma}}$ is a corresponding Neumann eigenfunction which thus solves the equation $-\D v = \mu_2(\Omega) v$ in $\Omega$ together with homogeneous  Dirichlet and Neumann boundary  
conditions on $\partial \Omega$. This is impossible by unique continuation. Hence $u_\sigma$ does not change sign.

Next we suppose by contradiction that  $u_\sigma \equiv 0$. Let then $(t,x) \in \Omega$, and let ${\cal C}$ be the connected component of the set $\{(t+\tau {\sigma},x) \::\: \tau \in \R\}\cap \Omega$ which contains $(t,x)$. Then $u$ is constant on ${\cal C}$. Since $\overline 
{\cal C} \cap \partial \Omega \not = \varnothing$, we thus conclude that $u(t,x) = \sqrt{\lambda_0}$ or $u(t,x)= -\sqrt{\lambda_0}$. Since this holds for every point$(t,x) \in \Omega$, the connectedness of $\Omega$ implies that $u \equiv \sqrt{\lambda_0}$ in $\Omega$ or $u \equiv -\sqrt{\lambda_0}$ in $\Omega$, which contradicts the first equation in (\ref{eq:overdetermined}). Consequently we have $u_\sigma \not \equiv 0$. Now \eqref{eq:dirichlet-partial-derivative} and the strong maximum principle imply that $u_\sigma>0$ in $\Omega$ or $u_\sigma<0$ in $\Omega$, as  claimed in (\ref{eq:claim-u-sigma}).\\
Next we observe that (\ref{eq:claim-u-sigma}) is impossible if $k \ge 2$, since then every unit vector $\sigma \in \R^k$ can be connected with $-\sigma$ by a continuous curve of unit vectores, whereas $u_{-{\sigma}}=- u_{{\sigma}}$.
 
So we conclude that $k=1$, and we write $u_{t}$ in place of $u_{{\sigma}}$ for ${\sigma} = 1$.  Replacing $u$ by $-u$ if necessary,  we may
assume by (\ref{eq:claim-u-sigma}) that $u_t > 0$ in $\Omega$. For $x \in \cN$ we now define $S_x:= \{t \in \R\: :\: (t,x) \in \Omega\} \subset \R$, 
and we consider a nonempty connected component $S \subset S_x$. Then the function $t \mapsto u(t,x)$ is strictly increasing in $S$. 
Moreover, if $t_1= \inf S$ and $t_2 = \sup S$, then $(t_1,x), (t_2,x) \in \partial \Omega$ and $u(t_1,x)<u(t_2,x)$, which implies that 
$u(t_1,x)=-\sqrt{\lambda_0}$ and $u(t_2,x)=\sqrt{\lambda_0}$.  We thus have the following property:
\begin{equation}
  \label{eq:claim-connected-component}
  \begin{aligned}
&\text{If $x \in \cN$ and $S$ is a nonempty connected component of $S_x$, then}\\
&\text{$t \mapsto u(t,x)$ is an increasing 
homeomorphism from $S$ to $(-\sqrt{\lambda_0},\sqrt{\lambda_0})$.}  
 \end{aligned}
\end{equation}
Next we claim the following:
\begin{equation}
  \label{eq:claim-projection}
\begin{aligned}
&\text{For every $x \in \cN$ there exists precisely one}\\
&\text{$\tau= \tau(x) \in \R$ with $(\tau(x),x) \in \Omega$ and $u(\tau(x),x)=0$.}  
\end{aligned}
\end{equation}
Indeed, let $\cN_0 \subset \cN$ denote the set of all $x \in \cN$ such that $(t,x) \in \Omega$ and $u(t,x)=0$ for some $t \in \R$. Then 
$\cN_0$ is open and nonempty, since $\Omega$ is open and, by (\ref{eq:claim-connected-component}), for every $(t,x) \in \Omega$ there exists $\tilde t \in \R$ such that $(\tilde t,x) \in \Omega$ and $u(\tilde t,x)=0$. Moreover, $\cN_0$ is closed in $\cN$. Indeed, let $(x_n)_n$ be a sequence in $\cN_0$ with $x_n \to x \in \cN$ as $n \to \infty$, and let $t_n \in \R$, $n \in \N$ be such that $(t_n,x_n) \in \Omega$ and $u(t_n,x_n)=0$.  Since $\Omega$ is bounded, we may pass to a subsequence such that $t_n \to t$ as $n \to \infty$. We then have $(t,x) \in \overline \Omega$ and $u(t,x)=0$. Hence $(t,x) \not \in \partial \Omega$ since $u^2 \equiv \lambda_0>0$ on $\partial \Omega$. Consequently, $(t,x) \in \Omega$ and therefore $x \in \cN_0$. In sum, it follows that $\cN_0 = \cN$ since $\cN$ is connected, and thus for 
every $x \in \cN$ there exists at least one $t \in \R$ with $(t,x) \in \Omega$ and $u(t,x)=0$. Combining this with the fact that $u$ does not vanish on $\partial \Omega$, we see that the functions
$$
t_\pm: \cN \to \R,\qquad 
\left\{
  \begin{aligned}
t_-(x)&:= \min\{t \in \R\::\: \text{$(t,x) \in \Omega$ and $u(t,x)=0$}\}\\
t_+(x)&= \max\{t \in \R\::\: \text{$(t,x) \in \Omega$ and $u(t,x)=0$}\}
  \end{aligned}
\right.
$$
are well defined, and that $(t_\pm(x),x) \in \Omega$ for every $x \in \cN$.  Moreover, since $u_t>0$ in $\Omega$, it follows from the implicit function theorem that these functions are continuous. As a consequence, the open sets 
$$
\Omega_-:= \{(t,x) \in \Omega\::\: t<t_-(x)\}, \qquad \Omega_+:= \{(t,x) \in \Omega\::\: t>t_+(x)\}
$$
and $\Omega_0:= \{(t,x) \in \Omega\::\: t_-(x)<t<t_+(x)\}$ are disjoint, and $\Omega_\pm \not = \varnothing$ since $(t_\pm(x),x) \in \Omega$ for every $x \in \cN$. Since $u$ has precisely two nodal domains by the Courant nodal domain theorem, it follows that $\Omega_0= \varnothing$ and therefore $t_-(x)=t_+(x)$ for all $x \in \cN$. Thus (\ref{eq:claim-projection}) is true, and the function $\cN \to \R,\:x \mapsto \tau(x)$ is continuous. Moreover,  as a consequence of (\ref{eq:claim-connected-component}) we have, for $(t,x) \in \Omega$,
\begin{equation}
  \label{eq:conclusion-two-claims}
\text{$u(t,x)<0\,\,$ iff $\,\, t<\tau(x)\qquad$ and $\qquad u(t,x)>0\,\,$ iff $\,\,t> \tau(x)$.}
\end{equation}
Next we consider the disjoint sets 
$$
\M_+= \{(t,x) \in \M \setminus \Omega \::\: t > \tau (x)\}\quad \text{and}\quad 
\M_-= \{(t,x) \in \M \setminus \Omega \::\: t < \tau(x)\},
$$
and we set $\Gamma_\pm := \partial \M_\pm$. It then follows that $\M \setminus \Omega= \M_+ \cup \M_-$ and $\partial \Omega = \Gamma_+ \cup \Gamma_-$, whereas 
$u \equiv \sqrt{\lambda_0}$ on $\Gamma_+$ and $u \equiv -\sqrt{\lambda_0}$ on $\Gamma_-$ by (\ref{eq:conclusion-two-claims}).
Since $|\n u| =0$ on $\de\O$, we may therefore extend $u$ to a $C^1$-function on $\M$ by setting $u\equiv \sqrt{\lambda_0}$ on $ \M_+$ and $u \equiv -\sqrt{\lambda_0}$ on $
\M_-$. Next, we fix $r>0$ such that $\mu_2(\Omega)= \frac{\pi^2}{4r^2}$, and 
we consider the functions
$$
v_s: \M \to \R,\qquad u_s(t,x)= \left \{
  \begin{aligned}
&-\sqrt{\lambda_0}  &&\qquad t \le -r- s,\\
&\sqrt{\lambda_0} \sin( \frac{\pi}{2r} (t+s)),&&\qquad -r- s <t < r-s,\\
&\sqrt{\lambda_0}  &&\qquad t \ge r- s.
   \end{aligned}
\right.
$$
for $s \in \R$. Moreover, we set 
$$
s_+:= \min \{t \::\: (t,x) \in \Gamma_+\} \qquad \text{and}\qquad  s_-:= \min \{t \::\: (t,x) \in \Gamma_-\}= \inf \{t\::\: (t,x) \in \Omega\} .
$$
If $s>0$ is chosen sufficiently large, we have $v_s \equiv \sqrt{\lambda_0}$ on $\Omega \cup \M_+$.  Hence we may consider 
$$
s_0:= \inf \{s  \ge r- s_+ \::\: \text{$v_s \ge u$ on $\M$}\}.
$$
Writing $\tilde v$ instead of $v_{s_0}$, we see that $\tilde v \ge u$ on $\M$ by continuity. Since $\tilde v(t,\cdot) \equiv - \sqrt{\lambda_0}$ for $t \le -r -s_0$ and $u >-\sqrt{\lambda_0}$ in $\Omega$, we infer that 
\begin{equation}
  \label{eq:s_0-inequa}
s_- \ge -r-s_0 .
 \end{equation}
 Moreover, setting $\tilde \Omega:= \{(t,x) \in \M\::\: -r -s_0 < t < r-s_0\}$, we have that 
\begin{equation}
  \label{eq:equation-difference}
-\Delta_g (\tilde v -u)= \mu_2(\Omega) (\tilde v- u)\qquad \text{in $\tilde
  \Omega \cap \Omega$}.  
\end{equation}
We distinguish the following cases.\\
\textbf{\em Case 1:} There is a point $(t_0,x_0) \in \Omega$ such that $u(t_0,x_0)=
\tilde v(t_0, x_0)$. In this case, we have $\tilde v(t_0, x_0) = u(t_0, x_0) \in
(-\sqrt{\lambda_0},\sqrt{\lambda_0})$, so that $(t_0,x_0) \in \tilde \Omega$. 
By (\ref{eq:equation-difference}) and the strong maximum principle, we then conclude that 
$\tilde v \equiv u$ in the
connected component $\cZ$ of $\tilde \Omega \cap \Omega$ containing $(t_0,x_0)$. 
Since $\partial \cZ \subset \partial \tilde \Omega \cup \partial \Omega$,
we infer that $\tilde v^2= u^2 = \lambda_0$ on $\partial
\cZ$ by continuity. We claim that $\Omega  \subset \cZ$. Indeed, let $(t,x) \in
\Omega$, and let $\gamma$ be a curve joining $(t,x)$ and $(t,x_0)$ within
$\Omega$. Then $u^2<\lambda_0$ along $\gamma$, and therefore $\gamma$ does
not intersect $\partial \cZ$. Hence $(t,x) \in \cZ$. We conclude that
$\Omega \subset \cZ$. Using that $\tilde v^2<\lambda_0$ in $\tilde \Omega$, we
similarly conclude that $\tilde \Omega \subset \cZ$. Consequently we have 
$\tilde \Omega \cup \Omega \subset \cZ \subset \tilde \Omega \cap \Omega$ and  
hence $\tilde \Omega= \Omega$, which means that 
$\Omega=\Omega_r$ and $\tilde v = u$ in $\Omega_r$ after translation in the $t$-variable.\\ 
\textbf{\em Case 2:} $\tilde v > u$ in $\Omega$ and $s_0=r -s_+$. 
In this case there exists $x_+ \in \cN$ such that $(s_+,x_+) \in \Gamma_+ \cap \partial \tilde \Omega$. Moreover, the outer normal of $\Omega$ at $(s_+,x_+)$ and the outer normal of $\tilde \Omega$ at $(s_+,x_+)$ are both given by $\nu=(1,0) \in \R \times T_{x_+} \cN$. Consequently, by (\ref{eq:equation-difference}) and since 
$u<\tilde v$ in $\Omega$ and $u(s_+,x_+)= \lambda_0= \tilde v(s_+,x_+)$, the Hopf boundary lemma implies that $\partial_{\eta} (\tilde v- u)(s_+,x_+)<0$. This however is
impossible since $\nabla u(s_+,x_+)=\nabla \tilde v(s_+,x_+)=0$.\\
\textbf{\em Case 3:} $\tilde v > u$ in $\Omega$ and $s_0>r - s_+$. 
 In this case we claim that 
\begin{equation}
  \label{eq:hopf-lemma-claim}
s_- = -r-s_0.
\end{equation}
Indeed, if -- recalling (\ref{eq:s_0-inequa}) -- we suppose by contradiction that $s_->-r-s_0$, then 
$\tilde v > u$ in $\Omega \cup \Gamma_-$, and this easily easily implies that $v_{s_0-\eps} \ge u$ on $\M$ for $\eps>0$ sufficiently small, contradicting the definition of $s_0$.  Hence (\ref{eq:hopf-lemma-claim}) is true. Arguing similarly as in Case 2, we now consider $x_+ \in \cN$ such that $(s_-,x_-) \in \Gamma_- \cap \partial \tilde \Omega$. In this case, the outer normal of $\Omega$ at $(s_-,x_-)$ and the outer normal of $\tilde \Omega$ at $(s_-,x_-)$ are both given by $\nu=(-1,0) \in \R \times T_{x_-} \cN$.
Noting that $u(s_-,x_-)= -\sqrt{\lambda_0}= \tilde v(s_-,x_-)$, we arrive at a contradiction via the Hopf boundary lemma as in Case 2. 

Hence Case 1 must occur, and in this case we already concluded that 
$\Omega=\Omega_r$ and $\tilde v = u$ in $\Omega_r$ after translation in the $t$-variable. From the definition of $\tilde v$ we then deduce that $\mu_2(\Omega)= \mu^{r}$, which, by the separation of variables argument given in the proof of Theorem~\ref{sec:weinberger-modified}, implies that $\mu^{r} \le \mu_2(\cN)$ and therefore $r \ge \Bigl(\frac{\mu(B^k)}{\mu(\cN)}\Bigr)^{\frac{1}{2}}$. The proof is thus finished.
\end{proof}


\begin{thebibliography}{99}
%

\bibitem{AS} Aithal, A. R. and Santhanam, G., Sharp upper bound for the first non-zero Neumann
eigen- value for bounded domains in rank-1 symmetric spaces. Trans. AMS Volume 348, Number 10,  1996.
%
%
\bibitem{Ashb-Bengu} Ashbaugh, M.S. and Benguria, R.D., Sharp upper bound to the first nonzero Neumann eigenvalue for bounded domains in spaces of constant curvature.
J. London Math. Soc. (2)  52  (1995),  no. 2, 402-416.
%

\bibitem{Aubin} Aubin, T., Nonlinear analysis on manifolds. Monge-Amp\`ere equations. Grundlehren der Mathematischen Wissenschaften [Fundamental Principles of Mathematical Sciences], 252. Springer-Verlag, New York, 1982. xii+204 pp.

\bibitem{berestycki-nirenberg:91} 
Berestycki, H., and Nirenberg, L.,
On the method of moving planes and the sliding method.
Bol. Soc. Brasil. Mat. (N.S.) 22 (1991), no. 1, 1--37.


%

%
\bibitem{chavel} Chavel I., Eigenvalues in Riemannian geometry,
  Academic Press, New York, 1984.



%
%
%

\bibitem{Soufi-Ilias}El Soufi, A. and Ilias S.,  Domain deformations and eigenvalues of the Dirichlet Laplacian in a Riemannian manifold.
Illinois J. Math.  51  (2007),  no. 2, 645-666.
%
%
%

\bibitem{fall-weth-sw-profile}
Fall, M.M. and Weth, T., Sharp local estimates for the Szegö-Weinberger profile in Riemannian manifolds. Calc. Var. Partial Differential Equations 51 (2014), no. 1--2, 217--242.


\bibitem{henrot-book-1} Henrot, A., Extremum problems for eigenvalues of elliptic operators. Frontiers in Mathematics. Birkhäuser Verlag, Basel, 2006. x+202 pp.

\bibitem{henrot-book-2} Henrot, A. and Pierre, M., Variation et optimisation de formes. (French) [Shape variation and optimization] Une analyse géométrique. [A geometric analysis] Mathématiques \& Applications (Berlin) [Mathematics \& Applications], 48. Springer, Berlin, 2005. xii+334 pp.

\bibitem{lebedew} Lebedev, N. N. Special functions and their applications. Revised English edition. Prentice-Hall, Inc., Englewood Cliffs, N.J. 1965

\bibitem{Nadi} N. Nadirashvili, Berger's isoperimetric problem and minimal immersion of surfaces.
Geom. Funct. Anal. 6 (1996) 879-897.

%

\bibitem{Sz} Szeg\"o, G. Inequalities for certain eigenvalues of a membrane of given area.  J. Rational Mech. Anal.  3,  (1954). 343-356.
%
\bibitem{Wein} Weinberger, H. F.
An isoperimetric inequality for the $N$-dimensional free membrane problem. J. Rational Mech. Anal. 5 (1956), 633-636.
%
\bibitem{Zanger}Zanger, D. Z., Eigenvalue variations for the Neumann problem,
 Appl. Math. Lett.  14  (2001),  no. 1, 39-43.


\end{thebibliography}
\end{document}